\newtheorem{thm}{Theorem}[section]
\newtheorem{cor}[thm]{Corollary}
\newtheorem{prop}[thm]{Proposition}
\newtheorem*{prob*}{Problem}
\newtheorem*{thm*}{Theorem}
\theoremstyle{definition}
\newtheorem{defn}[thm]{Definition}
\newtheorem*{defn*}{Definition}
\newtheorem{rem}[thm]{Remark}
\numberwithin{equation}{section}
\newcommand{\bx}{\breve{x}}
\newcommand{\by}{\breve{y}}
\newcommand{\C}{\mathbb C}
\newcommand{\Y}{\mathbb Y}
\newcommand{\Z}{\mathbb Z}
\newcommand{\Zp}{\mathbb Z_{\geq 0}}
\newcommand{\X}{\mathfrak{X}}
\DeclareMathOperator{\diag}{diag}
\begin{document}
\title[$z$-measures]
 {\bf{$Z$-measures on partitions related to the infinite Gelfand pair $\left(S(2\infty),
H(\infty)\right)$}}

\author{Eugene Strahov}

\begin{abstract}
The paper deals with the $z$-measures on partitions with the
deformation (Jack) parameters 2 or 1/2. We provide a detailed
explanation of the representation-theoretic origin of these
measures, and of their role in the harmonic analysis on the
infinite symmetric group.
$$
$$
\textsc{Keywords}. Infinite symmetric group,  symmetric functions,
Gelfand pairs, random partitions.
\end{abstract}

\thanks{
Department of Mathematics, The Hebrew University of Jerusalem,
Givat Ram, Jerusalem 91904. E-mail: strahov@math.huji.ac.il.
Supported by US-Israel Binational Science Foundation (BSF) Grant
No. 2006333,
 and by Israel Science Foundation (ISF) Grant No. 0397937.\\
}
\maketitle \section{Introduction} Let $S(\infty)$ denote the group
whose elements are finite permutations of $\{1,2,3,\ldots\}$. The
group $S(\infty)$ is called the infinite symmetric group, and it
is a model example of a "big" group. The harmonic analysis for
such groups  is an active topic of modern research, with
connections to different areas of mathematics from enumerative
combinatorics to random growth models and to the theory of
Painlev\'e equations.  A theory of harmonic analysis on the
infinite symmetric  and infinite-dimensional unitary groups is
developed  by Kerov, Olshanski and Vershik \cite{KOV1, KOV2},
Borodin \cite{Borodin-1}, Borodin and Olshanski
\cite{Borodin-2,Borodin-3}. For an introduction to harmonic
analysis on the infinite symmetric group see Olshanski
\cite{olshanski2003}. Paper by Borodin and Deift
\cite{BorodinDeift} studies differential equations arising in the
context of harmonic analysis on the infinite-dimensional unitary
group, and paper by Borodin and Olshanski \cite{BorodinCombin}
describes the link to problems of enumerative combinatorics, and
to certain random growth models. For very recent works on the
subject see, for example, Vershik and  Tsilevich \cite{vershik},
Borodin and Kuan \cite{BorodinKuan}.

Set
$$ G=S(\infty)\times S(\infty),
$$
$$
K=\diag S(\infty)=\left\{(g,g)\in G \mid g\in
S(\infty)\right\}\subset G.
$$
Then $(G,K)$ is an infinite dimensional Gelfand pair in the sense
of Olshanski \cite{olshanskiGelfandPairs}. It can be shown that
the biregular spherical representation of $(G,K)$ in the space
$\ell^2\left(S(\infty)\right)$ is irreducible. Thus the
conventional scheme of noncommutative harmonic analysis is not
applicable to the case of the infinite symmetric group.

In 1993, Kerov, Olshanski and Vershik \cite{KOV1} (Kerov,
Olshanski and Vershik \cite{KOV2} contains the details)
 constructed a family $\{T_z: z\in\C\}$ of unitary representations
 of the bisymmetric infinite group $G=S(\infty)\times
 S(\infty)$. Each representation $T_z$ acts in the Hilbert space
 $L^2(\mathfrak{S},\mu_t)$, where $\mathfrak{S}$ is a certain compact space called the space of
 virtual permutations, and $\mu_t$ is a distinguished $G$-invariant probability measure on
 $\mathfrak{S}$ (here $t=|z|^2$). The representations $T_z$
 (called the generalized regular representations) are reducible.
 Moreover, it is possible to extend the definition of $T_z$ to the
 limit values $z=0$ and $z=\infty$, and it turns out that
 $T_{\infty}$ is equivalent to the biregular representation
 of $S(\infty)\times S(\infty)$. Thus, the family $\{T_z\}$ can be
 viewed as a deformation of the biregular representation.
 Once the representations $T_z$
 are constructed, the main problem of the harmonic analysis on the
 infinite symmetric group is in decomposition of the generalized
 regular representations $T_z$ into irreducible ones.

 One of the initial steps in this direction can be described as
 follows. Let $\textbf{1}$ denote the function on $\mathfrak{S}$
 identically equal to 1. Consider this function as a vector of
 $L^2(\mathfrak{S},\mu_t)$. Then $\textbf{1}$ is a spherical
 vector, and the pair $(T_z,\textbf{1})$ is a spherical
 representation of the pair $(G,K)$, see, for example, Olshanski
 \cite{olshanski2003}, Section 2. The spherical function of
 $(T_z,\textbf{1})$ is the matrix coefficient
 $(T_z(g_1,g_2)\textbf{1},\textbf{1})$, where $(g_1,g_2)\in
 S(\infty)\times S(\infty)$. Set
 $$
 \chi_z(g)=\left(T_z(g,e)\textbf{1},\textbf{1}\right),\; g\in
 S(\infty).
 $$
 The function $\chi_z$ can be understood as a character of the
 group $S(\infty)$ corresponding to $T_z$. Kerov, Olshanski and
 Vershik \cite{KOV1,KOV2} found  the restriction
 of
 $\chi_z$ to $S(n)$ in terms of irreducible characters of $S(n)$.
 Namely, let $\Y_n$ be the set of Young diagrams with $n$ boxes.
 For $\lambda\in\Y_n$ denote by $\chi^{\lambda}$ the corresponding
 irreducible character of the symmetric group $S(n)$ of degree
 $n$. Then for any $n=1,2,\ldots$ the following formula holds true
 \begin{equation}\label{EquationHiDecomposition}
 \chi_z\biggl|_{S(n)}=\sum\limits_{\lambda\in\Y_n}M^{(n)}_{z,\bar{z}}(\lambda)\frac{\chi^{\lambda}}{\chi^{\lambda}(e)}.
 \end{equation}
In this formula $M^{(n)}_{z,\bar{z}}$ is a probability measure
(called the $z$-measure) on the set of Young diagrams with $n$
boxes, or on the set of integer partitions of $n$. Formula
(\ref{EquationHiDecomposition}) defines the $z$-measure
$M^{(n)}_{z,\bar{z}}$ as a weight attached to the corresponding
Young diagram in the decomposition of the restriction of $\chi_z$
to $S(n)$ in irreducible characters of $S(n)$. Expression
(\ref{EquationHiDecomposition}) enables to reduce the problem of
decomposition of $T_z$ into irreducible components to the problem
on the computation of spectral counterparts of
$M^{(n)}_{z,\bar{z}}$.

In addition to their role in the harmonic analysis on the infinite
symmetric group the $z$-measures described above are quite
interesting objects by themselves. It is possible to introduce
 more general objects, namely measures $M_{z,z'}^{(n)}$ on Young diagrams
with $n$ boxes. Such measures depend on two complex parameters
$z,z'$. If $z'=\bar{z}$, then $M_{z,z'}^{(n)}$ coincide with the
$z$-measures in equation (\ref{EquationHiDecomposition}). Under
suitable restrictions on $z$ and $z'$ the weights $M_{z,z'}^{(n)}$
are nonnegative and their sum is equal to 1. Thus $M^{(n)}_{z,z'}$
can be understood as probability measures on $\Y_n$.  For special
values of parameters $z,z'$ the $z$-measures turn into discrete
orthogonal polynomial ensembles which in turn related to
interesting probabilistic models, see Borodin and Olshanski
\cite{BorodinCombin}. In addition, the $z$-measures are a
particular case of the Schur measures introduced by Okounkov in
\cite{okounkov1}. The $z$-measures $M_{z,z'}^{(n)}$ were studied
in details in the series of papers by Borodin and Olshanski
\cite{Borodin-4, BorodinCombin, BO1, BO}, in Okounkov
\cite{okounkov0}, and in Borodin, Olshanski, and Strahov
\cite{BorodinOlshanskiStrahov}.

Moreover, as it follows from   Kerov \cite{kerov}, Borodin and
Olshanski \cite{BO1} it is natural to consider a deformation
$M_{z,z',\theta}^{(n)}$ of $M_{z,z'}^{(n)}$, where $\theta>0$ is
called the parameter of deformation (or the Jack parameter). Then
the measures $M^{(n)}_{z,z'}$ can be thought as the $z$-measures
with the Jack parameter $\theta=1$. It is shown in Borodin and
Olshanski \cite{BO1}, that $M_{z,z'}^{(n)}$ are in many ways
similar to log-gas (random-matrix) models with arbitrary
$\beta=2\theta$. In particular, if $\theta=2$ or $\theta=1/2$ one
expects that $M_{z,z'}^{(n)}$ will lead to Pfaffian point
processes, similar to ensembles of Random Matrix Theory of
$\beta=4$ or $\beta=1$ symmetry types, see Borodin and Strahov
\cite{BS}, Strahov \cite{strahov} for the available results in
this direction.

It is the  purpose of the present paper to describe the origin of
 $z$-measures with the Jack parameters $\theta=2$ and
$\theta=1/2$ in  the representation theory.  First we recall the
notion of the $z$-measures with an arbitrary Jack parameter
$\theta>0$.  Then we consider the symmetric group $S(2n)$ viewed
as the group of permutations of the set
$\{-n,\ldots,-1,1,\ldots,n\}$, and its subgroup $H(n)$ defined as
the centralizer of the product of transpositions $(-n,n),
(-n+1,n-1),\ldots ,(-1,1)$. The group $H(n)$ is called the
hyperoctahedral group of degree $n$. One knows that $(S(2n),
H(n))$ are Gelfand pairs, and their inductive limit,
$(S(2\infty),H(\infty))$, is an infinite dimensional Gelfand pair,
see Olshanski \cite{olshanskiGelfandPairs}.  We describe the
construction of a family of unitary spherical representations
$T_{z,\frac{1}{2}}$ of the infinite dimensional Gelfand pair
$(S(2\infty),H(\infty))$ and show that $z$-measures with the Jack
parameters $\theta=1/2$ appear as coefficients in the
decomposition of the spherical functions of $T_{z,\frac{1}{2}}$
into spherical functions of the Gelfand pair $(S(2n),H(n))$. Due
to the fact that $z$-measures with the Jack parameters $\theta=2$
and $\theta=1/2$ are related to each other in a very simple way,
see Proposition \ref{PropositionMSymmetries}, the construction
described above provides a representation-theoretic interpretation
for $z$-measures with the Jack parameter $\theta=2$ as well.
Therefore, it is natural to refer to such $z$-measures as to the
$z$-measures for the infinite dimensional Gelfand pair
$(S(2\infty),H(\infty))$, or, more precisely, as to the
$z$-measures of the representation $T_{z,\frac{1}{2}}$.

The fact that these measures play a role in the harmonic analysis
was mentioned in Borodin and Olshanski \cite{BO1}, and in our
explanation of this representation-theoretic aspect we used many
ideas from  Olshanski \cite{olshanskiletter}.

\textbf{Acknowledgements} I am   grateful to Grigori Olshanski for
numerous discussions and many valuable comments at different
stages of this work.

\section{The $z$-measures on partitions with the general
parameter $\theta>0$}\label{Sectionztheta} We use Macdonald
\cite{macdonald} as a basic reference for the notations related to
integer partitions and to symmetric functions. In particular,
every decomposition
$$
\lambda=(\lambda_1,\lambda_2,\ldots,\lambda_l):\;
n=\lambda_1+\lambda_2+\ldots+\lambda_{l},
$$
where $\lambda_1\geq\lambda_2\geq\ldots\geq\lambda_l$ are positive
integers, is called an integer partition. We identify integer
partitions with the corresponding Young diagrams.  The set of
Young diagrams with $n$ boxes  is denoted by $\Y_n$.

Following Borodin and Olshanski \cite{BO1}, Section 1, and Kerov
\cite{kerov} let $M_{z,z',\theta}^{(n)}$ be a complex measure on
$\Y_n$ defined by
\begin{equation}\label{EquationVer4zmeasuren}
M_{z,z',\theta}^{(n)}(\lambda)=\frac{n!(z)_{\lambda,\theta}(z')_{\lambda,\theta}}{(t)_nH(\lambda,\theta)H'(\lambda,\theta)},
\end{equation}
where $n=1,2,\ldots $, and where we use the following notation
\begin{itemize}
    \item $z,z'\in\C$ and $\theta>0$ are parameters, the parameter
    $t$ is defined by
    $$
    t=\frac{zz'}{\theta}.
    $$
    \item $(t)_n$ stands for the Pochhammer symbol,
    $$
    (t)_n=t(t+1)\ldots (t+n-1)=\frac{\Gamma(t+n)}{\Gamma(t)}.
    $$
    \item
    $(z)_{\lambda,\theta}$ is a multidemensional analogue of the
    Pochhammer symbol defined by
    $$
    (z)_{\lambda,\theta}=\prod\limits_{(i,j)\in\lambda}(z+(j-1)-(i-1)\theta)
    =\prod\limits_{i=1}^{l(\lambda)}(z-(i-1)\theta)_{\lambda_i}.
    $$
     Here $(i,j)\in\lambda$ stands for the box in the $i$th row
     and the $j$th column of the Young diagram $\lambda$, and we
     denote by $l(\lambda)$ the number of nonempty rows in the
     Young diagram $\lambda$.
    \item
    $$
    H(\lambda,\theta)=\prod\limits_{(i,j)\in\lambda}\left((\lambda_i-j)+(\lambda_j'-i)\theta+1\right),
   $$
   $$
     H'(\lambda,\theta)=\prod\limits_{(i,j)\in\lambda}\left((\lambda_i-j)+(\lambda_j'-i)\theta+\theta\right),
   $$
      where $\lambda'$ denotes the transposed diagram.
\end{itemize}
\begin{prop}\label{PropositionHH}
The following symmetry relations hold true
$$
H(\lambda,\theta)=\theta^{|\lambda|}H'(\lambda',\frac{1}{\theta}),\;\;(z)_{\lambda,\theta}
=(-\theta)^{|\lambda|}\left(-\frac{z}{\theta}\right)_{\lambda',\frac{1}{\theta}}.
$$
Here $|\lambda|$ stands for the number of boxes in the diagram
$\lambda$.
\end{prop}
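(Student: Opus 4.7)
My plan is to prove both identities by the same mechanism: the involution on boxes $(i,j)\in\lambda \leftrightarrow (j,i)\in\lambda'$, together with the obvious fact that $(\lambda')'=\lambda$ and $|\lambda'|=|\lambda|$. Under this bijection a box in row $i$ and column $j$ of $\lambda$ sits in row $j$ and column $i$ of $\lambda'$, and the arm/leg statistics get swapped. No deep combinatorial input is required; the only care needed is in tracking how the $+1$ inside $H$ turns into a $+\theta$ inside $H'$ after the substitution $\theta\mapsto 1/\theta$ and multiplication by $\theta^{|\lambda|}$.

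\textbf{Step 1 (the $H$-identity).} I would expand the right-hand side using the definition of $H'$:
$$
H'(\lambda',\tfrac{1}{\theta})=\prod_{(i,j)\in\lambda'}\left((\lambda'_i-j)+((\lambda')'_j-i)\tfrac{1}{\theta}+\tfrac{1}{\theta}\right).
$$
Since $(\lambda')'_j=\lambda_j$, the factor becomes $(\lambda'_i-j)+\tfrac{1}{\theta}(\lambda_j-i+1)$. Next I reindex by the bijection $(i,j)\in\lambda'\leftrightarrow (a,b)=(j,i)\in\lambda$, which transforms the factor into $(\lambda'_b-a)+\tfrac{1}{\theta}(\lambda_a-b+1)$. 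Multiplying the product of $|\lambda|$ such factors by $\theta^{|\lambda|}$ absorbs one $\theta$ into each factor, producing
$$
\prod_{(a,b)\in\lambda}\bigl((\lambda_a-b)+(\lambda'_b-a)\theta+1\bigr)=H(\lambda,\theta),
$$
as required.

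\textbf{Step 2 (the Pochhammer identity).} I apply the same strategy to $\left(-\frac{z}{\theta}\right)_{\lambda',1/\theta}$. By definition,
$$
\left(-\tfrac{z}{\theta}\right)_{\lambda',1/\theta}=\prod_{(i,j)\in\lambda'}\left(-\tfrac{z}{\theta}+(j-1)-(i-1)\tfrac{1}{\theta}\right).
$$
Reindexing by $(a,b)=(j,i)$ turns each factor into $-\tfrac{z}{\theta}+(a-1)-(b-1)\tfrac{1}{\theta}$, and then multiplying by $(-\theta)^{|\lambda|}$ (one factor of $-\theta$ per box) gives
$$
\prod_{(a,b)\in\lambda}\bigl(z+(b-1)-(a-1)\theta\bigr)=(z)_{\lambda,\theta}.
$$

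\textbf{Expected difficulty.} There is no real obstacle; the whole proof is bookkeeping with the transposition involution. The only thing to watch for is the asymmetry between $H$ (which has a $+1$) and $H'$ (which has a $+\theta$): this is precisely what makes $\theta\mapsto 1/\theta$ swap the two quantities after the correct power of $\theta$ is extracted, and it is the reason the two $H$-products are interchanged by transposition rather than each being invariant.
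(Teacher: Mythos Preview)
Your proof is correct and is exactly the verification the paper has in mind: the paper's proof simply states that the relations follow immediately from the definitions, and what you have written is the explicit unpacking of that via the transposition bijection $(i,j)\leftrightarrow(j,i)$. There is nothing to add.
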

\begin{proof}
These relations follow immediately from definitions of
$H(\lambda,\theta)$ and $(z)_{\lambda,\theta}$.
\end{proof}
\begin{prop}\label{PropositionMSymmetries}
We have
$$
M_{z,z',\theta}^{(n)}(\lambda)=M_{-z/\theta,-z'/\theta,1/\theta}^{(n)}(\lambda').
$$
\end{prop}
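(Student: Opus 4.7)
The plan is to verify the identity by direct substitution in the defining formula \eqref{EquationVer4zmeasuren}, using Proposition \ref{PropositionHH} to convert every factor evaluated at $(-z/\theta,-z'/\theta,1/\theta,\lambda')$ into a factor evaluated at $(z,z',\theta,\lambda)$, and then checking that all extraneous powers of $\theta$ and $-1$ cancel between numerator and denominator.

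First I would dispose of the Pochhammer factor $(t)_n$ by noting that the parameter $t=zz'/\theta$ is invariant under the substitution $(z,z',\theta)\mapsto(-z/\theta,-z'/\theta,1/\theta)$, since $\frac{(-z/\theta)(-z'/\theta)}{1/\theta}=\frac{zz'}{\theta}$. Thus the factor $(t)_n$ coincides on both sides, and the problem reduces to checking the identity
$$
\frac{(-z/\theta)_{\lambda',1/\theta}\,(-z'/\theta)_{\lambda',1/\theta}}{H(\lambda',1/\theta)\,H'(\lambda',1/\theta)}=\frac{(z)_{\lambda,\theta}(z')_{\lambda,\theta}}{H(\lambda,\theta)H'(\lambda,\theta)}.
$$

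Next I would handle the numerator. Apply the second identity of Proposition \ref{PropositionHH} with the substitution $(z,\theta,\lambda)\mapsto(-z/\theta,1/\theta,\lambda')$, using $(\lambda')'=\lambda$ and $|\lambda'|=|\lambda|$, to get
$$
(-z/\theta)_{\lambda',1/\theta}=(-1)^{|\lambda|}\theta^{-|\lambda|}(z)_{\lambda,\theta},
$$
and analogously for $z'$. Multiplying squares the sign, so the numerator picks up a clean factor of $\theta^{-2|\lambda|}$ times $(z)_{\lambda,\theta}(z')_{\lambda,\theta}$.

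For the denominator, the first identity of Proposition \ref{PropositionHH} gives directly
$$
H'(\lambda',1/\theta)=\theta^{-|\lambda|}H(\lambda,\theta),
$$
while the companion identity $H(\lambda',1/\theta)=\theta^{-|\lambda|}H'(\lambda,\theta)$ follows by applying the same relation with $(\theta,\lambda)\mapsto(1/\theta,\lambda')$. Multiplying yields $\theta^{-2|\lambda|}H(\lambda,\theta)H'(\lambda,\theta)$, and the two factors of $\theta^{-2|\lambda|}$ cancel, giving the claim.

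There is no real obstacle here; the only risk is clerical, namely keeping track of the sign $(-1)^{|\lambda|}$ (which disappears because it occurs an even number of times) and the exponent of $\theta$ (which is $-2|\lambda|$ in both numerator and denominator). The argument is essentially a mechanical bookkeeping exercise once Proposition \ref{PropositionHH} is in hand.
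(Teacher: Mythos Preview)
Your proof is correct and follows exactly the approach the paper indicates: apply the symmetry relations of Proposition~\ref{PropositionHH} term by term to the defining formula~\eqref{EquationVer4zmeasuren}. The paper's own proof is a one-line pointer to that same computation, so your write-up is simply a more detailed version of it.
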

\begin{proof}
Use definition of $M_{z,z',\theta}^{(n)}(\lambda)$, equation
(\ref{EquationVer4zmeasuren}), and apply Proposition
\ref{PropositionHH}.
\end{proof}
\begin{prop}\label{Prop1.3}
We have
$$
\sum\limits_{\lambda\in\Y_n}M_{z,z',\theta}^{(n)}(\lambda)=1.
$$
\end{prop}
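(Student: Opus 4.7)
The plan is to reduce the claim to a Cauchy-type identity for Jack symmetric functions. Clearing the $n$-independent prefactor $n!/(t)_n$ in (\ref{EquationVer4zmeasuren}), the proposition is equivalent to
\begin{equation*}
\sum_{\lambda \in \Y_n} \frac{(z)_{\lambda,\theta}(z')_{\lambda,\theta}}{H(\lambda,\theta)H'(\lambda,\theta)} = \frac{(t)_n}{n!}, \qquad t = \frac{zz'}{\theta}.
\end{equation*}
Packaging both sides as coefficients of $u^n$ in a generating series and using $\sum_n (t)_n u^n/n! = (1-u)^{-t}$, it suffices to establish
\begin{equation*}
\sum_{\lambda} \frac{(z)_{\lambda,\theta}(z')_{\lambda,\theta}}{H(\lambda,\theta)H'(\lambda,\theta)}\, u^{|\lambda|} = (1-u)^{-t}.
\end{equation*}

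The approach is to interpret the summand via Jack polynomials. Let $P_\lambda(\,\cdot\,;\theta)$ and $Q_\lambda(\,\cdot\,;\theta)$ be the Jack $P$- and $Q$-polynomials with parameter $\theta$ as in Macdonald \cite{macdonald}, Chapter VI, and let $\rho_z : \Lambda \to \C$ be the algebra homomorphism on the ring of symmetric functions determined by $\rho_z(p_k) = z$ for all $k \geq 1$, where the $p_k$ are the Newton power sums. A standard computation (using the explicit formulas for principal specializations of Jack polynomials, together with the definitions of $H$ and $H'$) shows that
\begin{equation*}
P_\lambda(\rho_z;\theta) = \frac{(z)_{\lambda,\theta}}{H(\lambda,\theta)}, \qquad Q_\lambda(\rho_{z'};\theta) = \frac{(z')_{\lambda,\theta}}{H'(\lambda,\theta)}.
\end{equation*}

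Given this, the desired identity follows from the Jack-polynomial Cauchy identity in its power-sum form, which for two algebra homomorphisms $\xi,\eta : \Lambda \to \C$ reads
\begin{equation*}
\sum_\lambda P_\lambda(\xi;\theta)\, Q_\lambda(\eta;\theta)\, u^{|\lambda|} = \exp\left(\frac{1}{\theta}\sum_{k \geq 1} \frac{\xi(p_k)\,\eta(p_k)}{k}\, u^k\right).
\end{equation*}
Substituting $\xi = \rho_z$ and $\eta = \rho_{z'}$ collapses the exponent to $\frac{zz'}{\theta}\sum_{k \geq 1} u^k/k = -t \log(1-u)$, giving $(1-u)^{-t}$. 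Extracting the coefficient of $u^n$ yields the proposition.

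The main obstacle is purely bookkeeping: verifying that the specialization formulas for $P_\lambda(\rho_z;\theta)$ and $Q_\lambda(\rho_{z'};\theta)$ match the paper's definitions of $(z)_{\lambda,\theta}$, $H(\lambda,\theta)$, and $H'(\lambda,\theta)$ with the sign and $\theta$-normalization conventions chosen here. This match can be checked directly from the hook-content-type formula for the value of $P_\lambda$ on a power-sum specialization, or equivalently cited from Kerov \cite{kerov} and Borodin--Olshanski \cite{BO1}, where exactly this formula underlies the definition of $M^{(n)}_{z,z',\theta}$ as a probability measure.
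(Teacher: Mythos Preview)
Your argument is correct and is exactly the standard proof; the paper itself gives no argument here, only the citation ``See Kerov \cite{kerov}, Borodin and Olshanski \cite{BO1,BOHARMONICFUNCTIONS},'' and the Jack--Cauchy identity plus power-sum specialization is precisely what those references do.

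One small caution on the bookkeeping you already flag: in the paper's convention the parameter $\theta$ corresponds to $1/\alpha$ in Macdonald's Chapter~VI, so the Cauchy kernel in power sums carries a factor $\theta$ (not $1/\theta$), and the specialization yielding $(z)_{\lambda,\theta}$ is $p_k\mapsto z/\theta$ rather than $p_k\mapsto z$. These two adjustments cancel in the exponent, $\theta\cdot\frac{z}{\theta}\cdot\frac{z'}{\theta}=zz'/\theta=t$, so you still land on $(1-u)^{-t}$; but as written your four displayed formulas are not simultaneously consistent with a single normalization of $P_\lambda(\,\cdot\,;\theta)$. Since you explicitly defer this check to \cite{kerov,BO1}, the proof is fine as a sketch.
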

\begin{proof}
See Kerov \cite{kerov}, Borodin and Olshanski
\cite{BO1,BOHARMONICFUNCTIONS}.
\end{proof}
\begin{prop}
If parameters $z, z'$ satisfy one of the three conditions listed
below, then the measure $M_{z,z',\theta}^{(n)}$ defined by
expression (\ref{EquationVer4zmeasuren}) is a probability measure
on $Y_n$. The conditions are as follows.\begin{itemize}
    \item Principal series: either
$z\in\C\setminus(\Z_{\leq 0}+\Zp\theta)$ and $z'=\bar z$.
    \item The complementary series: the parameter $\theta$ is a rational number, and both $z,z'$
are real numbers lying in one of the intervals between two
consecutive numbers from the lattice $\Z+\Z\theta$.
    \item The degenerate series: $z,z'$ satisfy one of the
    following conditions\\
    (1) $(z=m\theta, z'>(m-1)\theta)$ or $(z'=m\theta,
    z>(m-1)\theta)$;\\
    (2) $(z=-m, z'<-m+1)$ or $(z'=-m,
    z<m-1)$.
\end{itemize}

\end{prop}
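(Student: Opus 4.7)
The plan is as follows. By Proposition~\ref{Prop1.3}, the weights automatically sum to $1$ whenever the defining formula (\ref{EquationVer4zmeasuren}) makes sense, i.e.\ whenever $(t)_n\neq 0$. Since $\theta>0$, every factor inside $H(\lambda,\theta)$ and $H'(\lambda,\theta)$ is a strictly positive real number, so their product is positive, and the sign of $M^{(n)}_{z,z',\theta}(\lambda)$ equals that of $(z)_{\lambda,\theta}(z')_{\lambda,\theta}/(t)_n$. It therefore suffices, in each of the three cases, to verify $(t)_n>0$ and $(z)_{\lambda,\theta}(z')_{\lambda,\theta}\geq 0$ for every $\lambda\in\Y_n$.

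For the principal series, when $z'=\bar z$ the factors of $(z')_{\lambda,\theta}$ are the complex conjugates of those of $(z)_{\lambda,\theta}$ (the shift $(j-1)-(i-1)\theta$ being real), so $(z)_{\lambda,\theta}(z')_{\lambda,\theta}=|(z)_{\lambda,\theta}|^2\geq 0$; moreover $t=|z|^2/\theta>0$, since the condition $z\notin\Z_{\leq 0}+\Zp\theta$ excludes $z=0$. For the complementary series, write each factor as $z+c_{ij}$ with $c_{ij}=(j-1)-(i-1)\theta\in\Z+\Z\theta$. Because $z$ and $z'$ lie in a common open interval of $\R\setminus(\Z+\Z\theta)$, the shifts $z+c_{ij}$ and $z'+c_{ij}$ lie in a common shifted interval still free of lattice points, hence share a sign and $(z+c_{ij})(z'+c_{ij})>0$. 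Since $0$ is itself a lattice point, $z$ and $z'$ lie on the same side of $0$, so $t=zz'/\theta>0$.

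The degenerate series combines a support reduction with a uniform sign analysis. In case (1), with $z=m\theta$ and $m$ a positive integer, the factor associated with the hypothetical box $(m+1,1)$ equals $m\theta+0-m\theta=0$, so $(z)_{\lambda,\theta}$ vanishes unless $l(\lambda)\leq m$; on this reduced support every factor $(m-i+1)\theta+(j-1)$ is strictly positive. The hypothesis $z'>(m-1)\theta$ then yields $z'+(j-1)-(i-1)\theta\geq z'-(m-1)\theta>0$ for all $(i,j)\in\lambda$, giving $(z')_{\lambda,\theta}>0$ and $t=mz'>0$. In case (2), $z=-m$ and $z'<-m+1$, the factor at $(1,m+1)$ vanishes, so the measure is supported on $\{\lambda:\lambda_1\leq m\}$; on this support every factor $-m+(j-1)-(i-1)\theta$ is strictly negative (since $j-1\leq m-1$ and $-(i-1)\theta\leq 0$), and every factor $z'+(j-1)-(i-1)\theta$ is strictly negative (using $z'+m-1<0$). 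Each of the two Pochhammer-like products therefore carries the sign $(-1)^n$, and their product is positive, while $t=-mz'/\theta>0$ because $z'<-m+1\leq 0$.

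The main obstacle will be the case analysis for the degenerate series, particularly the sign bookkeeping once the support has been cut down to diagrams fitting inside a horizontal or vertical strip; the other two series amount to short observations. The interchanges $(z,z')\leftrightarrow(z',z)$ in both degenerate clauses follow trivially from the manifest $z\leftrightarrow z'$ symmetry of formula~(\ref{EquationVer4zmeasuren}), so only one half of each clause needs to be verified explicitly.
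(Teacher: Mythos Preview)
Your argument is correct and self-contained. The paper itself does not prove this proposition: it simply refers the reader to Propositions~1.2 and~1.3 of Borodin--Olshanski~\cite{BO1}. Your direct verification---noting that $H(\lambda,\theta),H'(\lambda,\theta)>0$ for $\theta>0$ and then checking in each series that $t>0$ and $(z)_{\lambda,\theta}(z')_{\lambda,\theta}\ge0$ for all $\lambda\in\Y_n$---supplies exactly what the paper omits, and is the natural elementary argument. The implicit reading that $m$ denotes a positive integer in the degenerate clauses is the intended one.
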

\begin{proof} See Propositions 1.2, 1.3 in Borodin and Olshanski
\cite{BO1}.
\end{proof}
Thus, if the  conditions in the Proposition above  are satisfied,
then $M_{z,z',\theta}^{(n)}$ is a probability measure defined on
$\Y_n$, as  follows from Proposition \ref{Prop1.3}. In case when
$z, z'$ are taken either from the principal series or the
complementary series we refer to $z, z'$ as to admissible
parameters of  the $z$-measure under considerations. We will refer
to $M_{z,z',\theta}^{(n)}(\lambda)$ as to the $z$-measure with the
deformation (Jack) parameter $\theta$.
\begin{rem}
When both $z,z'$ go to infinity, expression
(\ref{EquationVer4zmeasuren}) has a limit
\begin{equation}\label{EquationPlancherelInfy}
M_{\infty,\infty,\theta}^{(n)}(\lambda)=\frac{n!\theta^{n}}{H(\lambda,\theta)H'(\lambda,\theta)}
\end{equation}
called the Plancherel measure on $\Y_n$ with general $\theta>0$.
Statistics of the Plancherel measure with the general Jack
parameter $\theta>0$ is discussed in  many papers, see, for
example, a very recent paper by Matsumoto \cite{matsumoto}, and
references therein. Matsumoto \cite{matsumoto} compares limiting
distributions of rows of random partitions with distributions of
certain random variables from a traceless Gaussian
$\beta$-ensemble.
\end{rem}

\section{The spaces $ X(n)$ and their projective limit}
\subsection{The homogeneous space $X(n)=H(n)\setminus S(2n)$}
Let $S(2n)$ be the permutation group of $2n$ symbols realized as
that of the set  $\{-n,\ldots,-1,1,\ldots,n\}$. Let $\breve{t}\in
S(2n)$ be the product of the transpositions $(-n,n),(-n+1,n-1),
\ldots, (-1,1)$. By definition, the group $H(n)$ is the
centralizer of $\breve{t}$ in $S(2n)$. We can write
$$
H(n)=\left\{\sigma\biggl|\sigma\in S(2n), \sigma
\breve{t}\sigma^{-1}=\breve{t}\right\}.
$$
The group $H(n)$ is called the hyperoctahedral group of degree
$n$.

Set $X(n)=H(n)\setminus S(2n)$. Thus $X(n)$ is the space of right
cosets of the subgroup $H(n)$ in $S(2n)$.

It is not hard to check that the set $X(n)$ can be realized as the
set of all pairings  of $\{-n,\ldots,-1,1,\ldots,n\}$ into $n$
unordered pairs. Thus every element $\breve{x}$ of $X(n)$ is
representable as a collection of $n$ unordered pairs,
\begin{equation}\label{representationofelement}
\breve{x}\in X(n)\longleftrightarrow
\breve{x}=\biggl\{\{i_1,i_2\},\ldots,\{i_{2n-1},i_{2n}\}\biggr\},
\end{equation}
where $i_1,i_2,\ldots,i_{2n}$  are distinct elements of the set
$\{-n,\ldots,-1,1,\ldots,n\}$.

For example, if $n=2$, then $S(4)$ is the permutation group of
$\{-2,-1,1,2\}$, the element $\breve{t}$ is the product of
transpositions $(-2,-1)$ and $(1,2)$, the subgroup $H(2)$ is
\begin{equation}
\begin{split}
H(2)=\biggl\{&\left(
\begin{array}{cccc}
  -2 & -1 & 1 & 2 \\
  -2 & -1 & 1 & 2 \\
\end{array}\right),\; \left(
\begin{array}{cccc}
  -2 & -1 & 1 & 2 \\
  2 & -1 & 1 & -2 \\
\end{array}\right),\\
&\left(
\begin{array}{cccc}
  -2 & -1 & 1 & 2 \\
  -2 & 1 & -1 & 2 \\
\end{array}\right),\;
\left(
\begin{array}{cccc}
  -2 & -1 & 1 & 2 \\
  2 & 1 & -1 & -2 \\
\end{array}\right),\\
&\left(
\begin{array}{cccc}
  -2 & -1 & 1 & 2 \\
  -1 & -2 & 2 & 1 \\
\end{array}\right),\;
\left(
\begin{array}{cccc}
  -2 & -1 & 1 & 2 \\
  1 & -2 & 2 & -1 \\
\end{array}\right),\\
&\left(
\begin{array}{cccc}
  -2 & -1 & 1 & 2 \\
  -1 & 2 & -2 & 1 \\
\end{array}\right),\;
\left(
\begin{array}{cccc}
  -2 & -1 & 1 & 2 \\
  1 & 2 & -2 & -1 \\
\end{array}\right)
\biggr\},
\end{split}
\nonumber
\end{equation}
and the set $X(2)$ is the set consisting of three elements, namely
$$
\biggl\{\{-2,-1\},\{1,2\}\biggr\},
\biggl\{\{-2,1\},\{-1,2\}\biggr\}, \;\mbox{and}\;
\biggl\{\{-2,-2\},\{-1,1\}\biggr\}.
$$
So each element of $X(2)$ is the pairing of $\{-2,-1,1,2\}$ into
(two) unordered pairs.

 We have
\begin{equation}
|X(n)|=\frac{|S(2n)|}{|H(n)|}=\frac{(2n)!}{2^nn!}=1\cdot
3\cdot\ldots \cdot (2n-1). \nonumber
\end{equation}
\subsection{Canonical projections $p_{n,n+1}:X(n+1)\rightarrow X(n)$. The projective limit of the spaces $X(n)$}
\label{SubsectionSpaceX}

Given an element $\breve{x}'\in X(n+1)$ we define its derivative
element $\bx\in X(n)$ as follows. Represent $\breve{x}'$ as $n+1$
unordered pairs, as it is explained in the previous Section. If
$n+1$ and $-n-1$ are in the same pair, then $\breve{x}$ is
obtained from $\breve{x}'$ by deleting this pair. Suppose that
$n+1$ and $-n-1$ are in different pairs. Then $\breve{x}'$ can be
written as
$$
\breve{x}'=\biggl\{\{i_1,i_2\},\ldots, \{i_m,-n-1\},\ldots,
\{i_k,n+1\},\ldots, \{i_{2n+1},i_{2n+2}\}\biggr\}.
$$
In this case $\breve{x}$ is obtained from $\breve{x}'$ by removing
$-n-1$, $n+1$  from pairs $\{i_m,-n-1\}$ and $\{i_k, n+1\}$
correspondingly, and by replacing two these pairs, $\{i_m,-n-1\}$
and $\{i_k, n+1\}$, by one pair $\{i_m,i_k\}$.  The map
$\breve{x}'\rightarrow \breve{x}$, denoted by $p_{n,n+1}$, will be
referred to as the canonical projection of $X(n+1)$ onto $X(n)$.

Consider the sequence
$$
X(1)\leftarrow\ldots\leftarrow X(n)\leftarrow
X(n+1)\leftarrow\ldots
$$
of canonical projections, and let
$$
X=\varprojlim X(n)
$$
denote the projective limit of the sets $X(n)$. By definition, the
elements of $X$ are arbitrary sequences $\breve{x}=(\breve{x}_1,
\breve{x}_2,\ldots )$, such that $\breve{x}_n\in X(n)$, and
$p_{n,n+1}(\breve{x}_{n+1})=\breve{x}_n$. The set $X$ is a closed
subset of the compact space of all sequences $(\breve{x}_n)$,
therefore, it is a compact space itself.

In what follows we denote by $p_n$ the projection $X\rightarrow
X(n)$ defined by $p_n(\breve{x})=\breve{x}_n$.

\subsection{Cycles.  Representation of elements of $X(n)$ in terms of
arrow configurations on circles}\label{SectionArrows} Let $\bx$ be
an element of $X(n)$. Then $\bx$ can be identified with  arrow
configurations on  circles. Such  arrow configurations can be
constructed as follows. Once $\bx$ is written as a collection of
$n$ unordered pairs, one can represent $\bx$ as a union of cycles
of the form
\begin{equation}\label{formcycle}
j_1\rightarrow -j_2\rightarrow j_2\rightarrow -j_3\rightarrow
j_3\rightarrow\ldots\rightarrow -j_k \rightarrow j_k \rightarrow
-j_1\rightarrow j_1,
\end{equation}
where $j_1,j_2,\ldots ,j_k$ are distinct integers from the set
$\{-n,\ldots ,n\}$. For example, take
\begin{equation}\label{element}
\breve{x}=\biggl\{\{1,3\},\{-2,5\}, \{2,-1\},
\{-3,-5\},\{4,-6\},\{-4,6\}\biggr\}.
\end{equation}
Then $\bx\in X(3)$, and it is possible to think about $\bx$ as a
union of two cycles, namely
\begin{equation}\label{circle1}
1\rightarrow 3\rightarrow-3\rightarrow-5\rightarrow
5\rightarrow-2\rightarrow 2\rightarrow-1\rightarrow 1, \nonumber
\end{equation}
and
\begin{equation}\label{circle2}
4\rightarrow-6\rightarrow 6\rightarrow -4\rightarrow 4. \nonumber
\end{equation}
Cycle  (\ref{formcycle}) can be represented as a circle with
attached arrows. Namely,  we put on a circle points labelled by
$|j_1|$, $|j_2|$,$\ldots$, $|j_k|$, and attach arrows to these
points according to the following rules. The arrow attached to
$|j_1|$ is directed clockwise. If the next integer in the cycle
(\ref{formcycle}), $j_2$, has the same sign as $j_1$, then the
direction of the arrow attached to $|j_2|$ is the same as the
direction of the arrow attached to $|j_1|$, i.e. clockwise.
Otherwise, if the sign of $j_2$ is opposite to the sign of $j_1$,
the direction of the arrow attached to $|j_2|$ is opposite to the
direction of the arrow attached to $|j_1|$, i.e. counterclockwise.
Next, if the integer $j_3$ has the same sign as $j_2$, then the
direction of the arrow attached to $|j_3|$ is the same as the
direction of the arrow attached to $|j_2|$, etc. For example,  the
representation of of the element $\bx$ defined by (\ref{element})
in terms of arrow configurations on circles is shown on Fig. 1.
\begin{figure}
\begin{picture}(100,150)
\put(-10,100){\circle{200}}
\put(100,100){\circle{200}}
\put(-10,120){\circle*{2}} \put(-10,80){\circle*{2}}
\put(-30,100){\circle*{2}} \put(10,100){\circle*{2}}
\put(-10,124){$1$} \put(-10,69){$5$} \put(-42,100){$2$}
\put(14,100){$3$}
\put(-10,120){\vector(1,0){10}} \put(-10,80){\vector(-1,0){10}}
\put(-30,100){\vector(0,1){10}} \put(10,100){\vector(0,1){10}}
\put(100,120){\circle*{2}} \put(100,80){\circle*{2}}
\put(100,120){\vector(1,0){10}} \put(100,80){\vector(-1,0){10}}
\put(100,124){$4$} \put(100,69){$6$}
\end{picture}
\caption{The representation of the element
$$
\breve{x}=\biggl\{\{1,3\},\{-2,5\}, \{2,-1\},
\{-3,-5\},\{4,-6\},\{-4,6\}\biggr\}
$$
in terms of  arrow configurations on circles. The first circle
(from the left) represents  cycle $1\rightarrow
3\rightarrow-3\rightarrow-5\rightarrow 5\rightarrow-2\rightarrow
2\rightarrow-1\rightarrow 1$, and the second circle represents
cycle $4\rightarrow-6\rightarrow 6\rightarrow -4\rightarrow 4$.}
\end{figure}

In this representation the projection $p_{n,n+1}:X(n+1)\rightarrow
X(n)$ is reduced to removing the point $n+1$ together with the
attached arrow.
\section{The $t$-measures on $X$}
\subsection{Probability measures $\mu_t^{(n)}$ on $X(n)$, and $\mu_t$ on $X$}

The measures $\mu_t^{(n)}$ on the spaces $X(n)$ are natural
analogues of the Ewens measures on the group $S(n)$ described in
Kerov, Olshanski and Vershik \cite{KOV2}.

\begin{defn}\label{DefinitionEwensMeasures} For $t>0$ we set
$$
\mu_t^{(n)}(\breve{x})=\frac{t^{[\breve{x}]_n}}{t(t+2)\ldots
(t+2n-2)},
$$
where $\breve{x}\in X(n)$, and $[\breve{x}]_n$ denotes the number
of cycles in $\breve{x}$, or the number of circles in the
representation of $\bx$ in terms of arrow configurations, see
Section \ref{SectionArrows} .
\end{defn}
\begin{prop}\label{PROPOSITION4.2}
a) We have
\begin{equation}\label{normtequation}
\sum\limits_{\breve{x}\in X(n)}\mu_t^{(n)}(\breve{x})=1.
\end{equation}
Thus $\mu_t^{(n)}(\breve{x})$ can be understood as a probability
measure
on $X(n)$. \\
b) Given $t>0$, the canonical projections $p_{n,n+1}$ preserve the
measures $\mu_t^{(n)}(\breve{x})$, which means that the condition
\begin{equation}\label{mutnproperty}
\mu_t^{(n+1)}\biggl(\{\breve{x}'\;\vert\; \breve{x}'\in
X(n+1),p_{n,n+1}(\breve{x}')=\breve{x}\}\biggr)=\mu_t^{(n)}(\breve{x})
\end{equation}
is satisfied for each $ \breve{x}\in X(n)$.
\end{prop}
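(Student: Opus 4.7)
The natural plan is to prove part (b) first by analyzing the fibers of $p_{n,n+1}$, and then deduce part (a) by induction, using part (b) as the inductive step.

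First I would classify the preimages of a fixed $\breve{x}\in X(n)$ under $p_{n,n+1}$. Given $\breve{x}$, written as $n$ unordered pairs on $\{-n,\dots,-1,1,\dots,n\}$, a preimage $\breve{x}'\in X(n+1)$ is determined by where the new symbols $n+1$ and $-n-1$ are placed. There are exactly two cases dictated by the description of $p_{n,n+1}$ in Section~\ref{SubsectionSpaceX}. In Case~1, the pair $\{n+1,-n-1\}$ is itself one of the pairs of $\breve{x}'$; this contributes exactly one preimage. In Case~2, $n+1$ and $-n-1$ lie in different pairs, so $\breve{x}'$ is obtained from $\breve{x}$ by choosing one of its $n$ pairs $\{a,b\}$ and splitting it into $\{a,-n-1\},\{b,n+1\}$ or $\{a,n+1\},\{b,-n-1\}$; this contributes $2n$ preimages. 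Hence $|p_{n,n+1}^{-1}(\breve{x})|=2n+1$.

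Next I would track the cycle count $[\cdot]$ through these two cases, using the arrow-on-circles picture of Section~\ref{SectionArrows}. In Case~1 the pair $\{n+1,-n-1\}$ forms its own new circle consisting of a single point, so $[\breve{x}']_{n+1}=[\breve{x}]_n+1$. In Case~2 the insertion of the point $n+1$ with its arrow into an already existing circle of $\breve{x}$ (the one containing the chosen pair $\{a,b\}$) merely enlarges that circle by one point, leaving the number of circles unchanged, so $[\breve{x}']_{n+1}=[\breve{x}]_n$. Consequently,
\begin{equation}
\sum_{\breve{x}'\in p_{n,n+1}^{-1}(\breve{x})}t^{[\breve{x}']_{n+1}}
= t\cdot t^{[\breve{x}]_n}+2n\cdot t^{[\breve{x}]_n}
=(t+2n)\,t^{[\breve{x}]_n}. \nonumber
\end{equation}

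Dividing by $t(t+2)\cdots(t+2n)$ proves part~(b) directly:
\begin{equation}
\mu_t^{(n+1)}\bigl(p_{n,n+1}^{-1}(\breve{x})\bigr)
=\frac{(t+2n)\,t^{[\breve{x}]_n}}{t(t+2)\cdots(t+2n)}
=\frac{t^{[\breve{x}]_n}}{t(t+2)\cdots(t+2n-2)}
=\mu_t^{(n)}(\breve{x}). \nonumber
\end{equation}
Finally, for part~(a) I would argue by induction on $n$. The base $n=1$ is trivial: $X(1)=\{\{\{-1,1\}\}\}$, with one cycle, so the sum equals $t/t=1$. For the inductive step, summing the identity above over $\breve{x}\in X(n)$ gives
\begin{equation}
\sum_{\breve{x}'\in X(n+1)}\mu_t^{(n+1)}(\breve{x}')
=\sum_{\breve{x}\in X(n)}\mu_t^{(n)}(\breve{x})=1. \nonumber
\end{equation}

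The only non-routine point is the cycle-count bookkeeping in Case~2, and here the arrow-on-circles description introduced in Section~\ref{SectionArrows} makes the verification transparent: inserting a new labeled point with its arrow onto an existing circle cannot change the number of connected components.
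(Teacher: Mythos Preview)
Your proof is correct and follows essentially the same approach as the paper: both analyze the fiber $p_{n,n+1}^{-1}(\breve{x})$, observe it contains one element with $[\breve{x}']_{n+1}=[\breve{x}]_n+1$ and $2n$ elements with $[\breve{x}']_{n+1}=[\breve{x}]_n$, and combine this with the definition of $\mu_t^{(n)}$. The only difference is organizational: the paper proves (a) first and then (b), effectively performing the fiber computation twice, whereas you prove the pointwise fiber identity once, use it for (b), and then sum it to obtain (a); your ordering is slightly more economical but the content is the same.
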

\begin{proof}
If $n=1$, then $X(1)$ consists of only one element, namely
$\{-1,1\}$, and from Definition \ref{DefinitionEwensMeasures} we
immediately see that equation (\ref{normtequation}) is satisfied
in this case.

Let $\breve{x}$ be an arbitrary element of $X(n)$. Represent
$\breve{x}$ in terms of circles with attached arrows, as it is
explained in Section \ref{SectionArrows}. Consider the set
\begin{equation}\label{set}
\left\{\breve{x}'|\; \breve{x}'\in X(n+1),
p_{n,n+1}(\breve{x}')=\breve{x}\right\}.
\end{equation}
It is not hard to see that this set consists of $2n+1$ points.
Indeed, given $\breve{x}\in X(n)$ we  can obtain $\breve{x}'$ from
set (\ref{set}) (i.e. $\breve{x}'$ which lies above $\breve{x}$
with respect to the canonical projection $p_{n,n+1}$)  by adding
an arrow to existing circle in $2n$ ways, or by creating a new
circle.

If $\breve{x}'$ is obtained from $\breve{x}$ by creating a new
circle, then
$$
[\breve{x}']_{n+1}=[\breve{x}]_n+1,\;\; \mbox{and}\;\;
t^{[\breve{x}']_{n+1}}=t^{[\breve{x}]_{n}+1}.
$$
If $\breve{x}'$ is obtained from $\breve{x}$ by adding an arrow to
an existing circle, then
$$
[\breve{x}']_{n+1}=[\breve{x}]_n.
$$
Therefore, the relation
$$
\sum\limits_{\breve{x}'\in
X(n+1)}t^{[\breve{x}']_{n+1}}=(t+2n)\sum\limits_{\breve{x}\in
X(n)}t^{[\breve{x}]_{n}}
$$
is satisfied. From the recurrent relation above we obtain
$$
\sum\limits_{\breve{x}'\in
X(n+1)}t^{[\breve{x}']_{n+1}}=t(t+2)\ldots (t+2n).
$$
This formula is equivalent to equation (\ref{normtequation}), and
the first statement of the Proposition is proved.

Let us now prove the second statement of the Proposition. We need
to show that the condition (\ref{mutnproperty}) is satisfied for
each $\breve{x}\in X(n)$. We have
\begin{equation}\label{z1}
\mu_t^{(n+1)}\biggl(\{\breve{x}'\;\vert\; \breve{x}'\in
X(n),p_{n,n+1}(\breve{x}')=\breve{x}\}\biggr)=\sum\limits_{\breve{x}':\breve{x}'\in
X(n+1),
p_{n,n+1}(\breve{x}')=\breve{x}}\frac{t^{[\breve{x}']_{n+1}}}{t(t+2)\ldots
(t+2n)},
\end{equation}
where we have used the definition of $\mu_t^{(n)}$, Definition
\ref{DefinitionEwensMeasures}. By the same argument as in the
proof of the first statement of the Proposition the sum in the
righthand side of equation (\ref{z1}) can be decomposed into two
sums. This first sum runs over those $\breve{x}'$ that are
obtained from $\breve{x}$ by adding an arrow to one of the
existing circles of $\breve{x}$. This sum is equal to
\begin{equation}\label{z2}
\frac{(2n)t^{[\breve{x}]_{n}}}{t(t+2)\ldots (t+2n)}.
\end{equation}
The second sum runs over those $\breve{x}'$ that are obtained from
$\breve{x}$ by creating a new circle. There is only one such
$\breve{x}'$, and its contribution is
\begin{equation}\label{z3}
\frac{t\; t^{[\breve{x}]_{n}}}{t(t+2)\ldots (t+2n)}.
\end{equation}
Adding expressions (\ref{z2}) and (\ref{z3}) we obtain
$$
\mu_t^{(n+1)}\biggl(\{\breve{x}'\;\vert\; \breve{x}'\in
X(n),p_{n,n+1}(\breve{x}')=\breve{x}\}\biggr)=\frac{(2n)t^{[\breve{x}]_{n}}}{t(t+2)\ldots
(t+2n)}+\frac{t\; t^{[\breve{x}]_{n}}}{t(t+2)\ldots (t+2n)},
$$
and the righthand side of the above equation  is
$\mu_t^{(n)}(\breve{x})$.
\end{proof}
It follows from Proposition \ref{PROPOSITION4.2} that for any
given $t>0$, the canonical projection $p_{n-1,n}$ preserves the
measures $\mu_t^{(n)}$. Hence the measure
$$
\mu_t=\varprojlim \mu_t^{(n)}
$$
on $X$ is correctly defined, and it is a probability measure. Note
that as in the case considered in Kerov, Olshanski, and Vershik
\cite{KOV2}, Section 2, the probability space $(X,\mu_t)$ is
closely related to the Chines Restaurant Process construction, see
Aldous \cite{Aldous}, Pitman \cite{Pitman}.
\subsection{The group $S(2\infty)$ and its action on the space $X$}
First we describe the right action of the group $S(2n)$ on the
space $X(n)$, and then we extend it to the right action of
$S(2\infty)$ on $X$.

Let $\breve{x}_n\in X(n)$. Then $\breve{x}_n$ can be written as a
collection of $n$ unordered pairs (equation
(\ref{representationofelement})). Let  $g$ be a permutation from
$S(2n)$,
$$
g:\;\; \left(\begin{array}{ccccc}
  -n & -n+1 & \ldots & n-1 & n \\
  g(-n) & g(-n+1) & \ldots & g(n-1) & g(n) \\
\end{array}\right).
$$
The right action of the group $S(2n)$ on the space $X(n)$ is
defined by
$$
\breve{x}_n\cdot
g=\biggl\{\{g(i_1),g(i_2)\},\{g(i_3),g(i_4)\},\ldots,
\{g(i_{2n-1}),g(i_{2n})\}\biggr\}.
$$
\begin{prop}The canonical projection $p_{n,n+1}$ is equivariant
with respect to the right action of the group $S(2n)$ on the space
$X(n)$, which means
$$
p_{n,n+1}(\breve{x}\cdot g)=p_{n,n+1}(\breve{x})\cdot g,
$$
for all $\breve{x}\in X(n+1)$, and all $g\in S(2n)$.
\end{prop}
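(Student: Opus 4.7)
The plan is to argue directly from the definition of $p_{n,n+1}$ given in Section \ref{SubsectionSpaceX}. The only conceptual point to fix at the outset is how $g\in S(2n)$ is allowed to act on $X(n+1)$: this is via the natural embedding $S(2n)\hookrightarrow S(2n+2)$ that fixes both $n+1$ and $-n-1$ pointwise. Consequently, for any $\breve x'\in X(n+1)$, the pair $\{n+1,-n-1\}$ occurs in $\breve x'$ if and only if it occurs in $\breve x'\cdot g$, and more generally the positions of $\pm(n+1)$ inside the pair decomposition of $\breve x'\cdot g$ are determined entirely by their positions in $\breve x'$.

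With this observation in hand, I would split the verification into the same two cases used to define the projection. In the first case, $n+1$ and $-n-1$ already lie in a common pair of $\breve x'$; then they also lie in a common pair of $\breve x'\cdot g$, and in both $\breve x'$ and $\breve x'\cdot g$ the projection consists in removing that pair. The remaining $n$ pairs transform under $g$ in exactly the same way in either order of operations, so $p_{n,n+1}(\breve x'\cdot g)=p_{n,n+1}(\breve x')\cdot g$.

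In the second case, write $\breve x'=\{\ldots,\{i_m,-n-1\},\ldots,\{i_k,n+1\},\ldots\}$ with $i_m\ne i_k$. Then $\breve x'\cdot g$ contains the pairs $\{g(i_m),-n-1\}$ and $\{g(i_k),n+1\}$, so applying $p_{n,n+1}$ afterwards produces the pair $\{g(i_m),g(i_k)\}$. Doing it in the opposite order, $p_{n,n+1}(\breve x')$ contains the pair $\{i_m,i_k\}$, and right multiplication by $g$ turns it into $\{g(i_m),g(i_k)\}$. All remaining pairs of $\breve x'$ are untouched by $p_{n,n+1}$ and are transformed by $g$ identically in either order. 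Hence $p_{n,n+1}(\breve x'\cdot g)=p_{n,n+1}(\breve x')\cdot g$ in this case as well.

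There is no real obstacle here: the argument is essentially a case check on the definition of $p_{n,n+1}$, and the only subtlety is making the embedding $S(2n)\subset S(2n+2)$ explicit so that it is clear $g$ fixes $\pm(n+1)$ and therefore commutes with the bookkeeping operations that define $p_{n,n+1}$.
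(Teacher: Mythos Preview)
Your argument is correct. It differs from the paper's proof in the representation used: you work directly with the pair decomposition of Section~\ref{SubsectionSpaceX} and split into the two cases built into the definition of $p_{n,n+1}$, whereas the paper passes to the arrow-configuration picture of Section~\ref{SectionArrows}. In that picture the projection $p_{n,n+1}$ is simply ``delete the point $n+1$ with its arrow,'' while the right action of $g\in S(2n)$ only permutes the labels $1,\dots,n$ and flips their arrows; since these two operations touch disjoint data, they visibly commute, and no case split is needed. Your approach has the advantage of not invoking the auxiliary circle picture at all, staying entirely within the original definition; the paper's approach trades that for a one-line argument once the translation to circles is accepted. Both are equally valid and equally elementary.
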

\begin{proof}
Let $\breve{x}$ be an arbitrary element of $X(n+1)$. Represent
$\breve{x}\in X(n+1)$ in terms of configurations of arrows on
circles, as it is described in Section \ref{SectionArrows}. In
this representation the right action of an element $g$ from
$S(2n)$ on $x$ is reduced to permutations of numbers $1,2,\ldots
n$ on the circles, and to changes in the directions of the arrows
attached to these numbers. The number $n+1$, and the direction of
the arrow attached to $n+1$ remains unaffected by the action of
$S(2n)$. Since $p_{n,n+1}(\breve{x})$ is obtained from $\breve{x}$
by deleting $n+1$ together with the attached arrow, the statement
of the Proposition follows.
\end{proof}
Since the canonical projection $p_{n,n+1}$ is equivariant, the
right action of $S(2n)$ on $X(n)$ can be extended to the right
action of $S(2\infty)$ on $X$. For $n=1,2,\ldots $ we identify
$S(2n)$ with the subgroup of permutations $g\in S(2n+2)$
preserving the elements $-n-1$ and $n+1$ of the set
$\{-n-1,-n,\ldots,-1,1,\ldots,n,n+1\}$, i.e.
$$
S(2n)=\biggl\{g\biggl|g\in S(2n+2),\; g(-n-1)=-n-1,\; \mbox{and}\;
g(n+1)=n+1 \biggr\}.
$$

Let $S(2)\subset S(4)\subset S(4)\ldots $ be the collection of
such subgroups. Set
$$
S(2\infty)=\bigcup_{n=1}^{\infty}S(2n).
$$
Thus $S(2\infty)$ is the inductive limit of subgroups $S(2n)$,
$$
S(2\infty)=\varinjlim S(2n).
$$
If $\breve{x}=(\breve{x}_1,\breve{x}_2,\ldots )\in X$, and $g\in
S(2\infty)$, then the right action of $S(2\infty)$ on
$X=\varprojlim X(n)$,
$$X\times
S(2\infty)\longrightarrow X,
$$ is defined as $\breve{x}\cdot g=\check{y}$, where
$\breve{x}_n\cdot g=\breve{y}_n$ for all $n$ so large that $g\in
S(2\infty)$ lies in $S(2n)$.
\begin{prop}\label{Proposition1.444444}We have
$$
p_n(\breve{x}\cdot g)=p_n(\breve{x})\cdot g
$$
for all $\breve{x}\in X$, $g\in S(2\infty)$, and for all $n$ so
large that $g\in S(2n)$.
\end{prop}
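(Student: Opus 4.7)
The plan is to read off the statement directly from the definition of the $S(2\infty)$-action on $X$, using the equivariance of the canonical projections $p_{n,n+1}$ (the previous proposition) only to verify that this action is well-defined.

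First I would fix $g \in S(2\infty)$ and choose $n$ large enough that $g \in S(2n)$; automatically $g \in S(2m)$ for every $m \geq n$ under the chain $S(2n) \subset S(2n+2) \subset \cdots$. Writing $\breve{x} = (\breve{x}_1, \breve{x}_2, \ldots)$ and $\breve{x} \cdot g = \breve{y} = (\breve{y}_1, \breve{y}_2, \ldots)$, the definition of the action on $X$ gives $\breve{y}_m = \breve{x}_m \cdot g$ for every $m \geq n$. Since $p_n$ is by definition evaluation at the $n$-th coordinate, I get
$$
p_n(\breve{x} \cdot g) = \breve{y}_n = \breve{x}_n \cdot g = p_n(\breve{x}) \cdot g,
$$
which is the claim.

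The only point requiring justification is that the prescription $\breve{y}_m := \breve{x}_m \cdot g$ (for $m \geq n$) extends to a genuine element of $X = \varprojlim X(m)$, i.e.\ that $p_{m,m+1}(\breve{y}_{m+1}) = \breve{y}_m$ holds. For $m \geq n$ this follows from the previous proposition: since $g \in S(2m)$, equivariance of $p_{m,m+1}$ gives $p_{m,m+1}(\breve{x}_{m+1} \cdot g) = p_{m,m+1}(\breve{x}_{m+1}) \cdot g = \breve{x}_m \cdot g$, using also the compatibility $p_{m,m+1}(\breve{x}_{m+1}) = \breve{x}_m$ built into membership in $X$. For $m < n$ the components $\breve{y}_m$ are defined by the iterated projections $p_{m,n}(\breve{y}_n)$, for which the compatibility is automatic.

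There is no real obstacle: the substantive content is packaged in the previous proposition (equivariance of $p_{n,n+1}$ under $S(2n)$), and the present statement is essentially a bookkeeping consequence of the fact that the action was defined so as to be compatible with the projective limit.
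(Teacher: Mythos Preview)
Your argument is correct and matches the paper's own proof, which simply states that the claim follows immediately from the definition of $p_n$ and of the right action of $S(2\infty)$ on $X$. You have just unpacked that sentence in more detail, including the well-definedness check via the equivariance of $p_{n,n+1}$.
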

\begin{proof}The claim follows immediately from the very
definition of the projection $p_n$, and of the right action of
$S(2\infty)$ on $X$.
\end{proof}
\subsection{The fundamental cocycle}\label{SectionCocycle}
 Recall that $[.]_n$ denotes
the number of cycles in the cycle representation of an element
from $X(n)$ (see Section \ref{SectionArrows} where the cycle
structure of the elements from $X(n)$ was introduced).
\begin{prop}
For any $\breve{x}=(\breve{x}_n)\in X$, and $g\in S(2\infty)$, the
quantity
$$
c(\breve{x};g)=[p_n(\breve{x}\cdot
g)]_n-[p_n(\breve{x})]_n=[p_n(\breve{x})\cdot g]_n-[p_n(\bx)]_n
$$
does not depend on $n$ provided that $n$ is so large that $g\in
S(2n)$.
\end{prop}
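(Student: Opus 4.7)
The plan is to show that the quantity $c_n(\breve{x};g) = [p_n(\breve{x}\cdot g)]_n - [p_n(\breve{x})]_n$ stabilizes in $n$ by establishing the one-step identity $c_{n+1}(\breve{x};g) = c_n(\breve{x};g)$ whenever $g\in S(2n)$ (so a fortiori $g\in S(2(n+1))$). Since $p_n = p_{n,n+1}\circ p_{n+1}$, and since $p_{n+1}(\breve{x}\cdot g) = p_{n+1}(\breve{x})\cdot g$ by Proposition \ref{Proposition1.444444}, setting $\breve{y} = p_{n+1}(\breve{x})\in X(n+1)$ and $\breve{y}' = \breve{y}\cdot g\in X(n+1)$ reduces the task to proving
$$
[\breve{y}']_{n+1} - [p_{n,n+1}(\breve{y}')]_{n} \;=\; [\breve{y}]_{n+1} - [p_{n,n+1}(\breve{y})]_{n}.
$$

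To evaluate each side I would use the description of $p_{n,n+1}$ from Section \ref{SubsectionSpaceX} (already exploited in the proof of Proposition \ref{PROPOSITION4.2}). If $\{n+1,-n-1\}$ is itself a pair of $\breve{y}$, it constitutes a one-pair cycle that $p_{n,n+1}$ deletes, so the cycle count drops by $1$. Otherwise $n+1$ and $-n-1$ lie in two distinct pairs which $p_{n,n+1}$ merges into a single pair; in the circle-with-arrows picture of Section \ref{SectionArrows} this amounts to erasing the single point $n+1$ together with its arrow from a circle carrying at least one other point, leaving that circle intact and the cycle count unchanged. Thus $[\breve{y}]_{n+1} - [p_{n,n+1}(\breve{y})]_n \in\{0,1\}$, with value $1$ precisely when $\{n+1,-n-1\}$ is a pair of $\breve{y}$, and similarly for $\breve{y}'$.

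The key observation is that because $g\in S(2n)$ fixes both $n+1$ and $-n-1$, and the right action on $X(n+1)$ is defined pairwise by $\{i_1,i_2\}\mapsto\{g(i_1),g(i_2)\}$, the set $\{n+1,-n-1\}$ is a pair of $\breve{y}$ if and only if it is a pair of $\breve{y}' = \breve{y}\cdot g$. Hence $\breve{y}$ and $\breve{y}'$ fall into the same case of the dichotomy above, the two differences agree, and the identity follows. The argument is essentially combinatorial bookkeeping with the arrow-configuration model; the only subtlety is to verify the two cases cleanly using that model, and I do not anticipate any serious obstacle beyond this.
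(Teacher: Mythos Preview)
Your argument is correct, and it takes a cleaner route than the paper. Both proofs reduce to the same one-step identity for $\breve{y}\in X(n+1)$ and $g\in S(2n)$, but the paper then writes $g$ as a product of transpositions and carries out a case analysis on a single transposition $(i,j)$: whether $i,j$ lie in the same cycle of $\breve{y}$ or in different ones, and in the former case on the four possible mutual positions of $i,-i,j,-j$ in that cycle, checking in each subcase that the change in cycle number matches for $\breve{y}$ and $p_{n,n+1}(\breve{y})$. Your approach instead fixes the variable on the other side of the identity: you compute $[\breve{y}]_{n+1}-[p_{n,n+1}(\breve{y})]_n$ directly as $0$ or $1$ according to whether $\{n+1,-(n+1)\}$ is a pair of $\breve{y}$, and observe that this criterion is $g$-invariant because $g\in S(2n)$ fixes $\pm(n+1)$. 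This sidesteps the transposition reduction and the configuration bookkeeping entirely; the price is that it uses the circle model a bit more implicitly (one should note, as you do, that when $\{n+1,-(n+1)\}$ is not a pair the corresponding circle has at least two labelled points, so removing $n+1$ does not destroy it), but this is exactly what Section~\ref{SectionArrows} records and what Proposition~\ref{PROPOSITION4.2} already exploits.
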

\begin{proof} Let $g$ be an element of $S(2n)$. To prove the
Proposition it is enough to show that the condition
\begin{equation}\label{pcompatibility}
[p_{n,n+1}(\bx)\cdot g]_n-[p_{n,n+1}(\bx)]_n=[\bx\cdot
g]_{n+1}-[\bx]_{n+1}
\end{equation}
is satisfied for any element $\bx$ of $X(n+1)$. Since $g\in S(2n)$
can be always represented as a product of transpositions, and
since $p_{n,n+1}$ is equivariant with respect to the right action
of the group $S(2n)$, it is enough to prove (\ref{pcompatibility})
for the case when $g$ is a transposition. Thus we assume that $g$
is a transposition $(ij)\in S(2n)$, where $i$ and $j$ are two
different elements of the set $\{-n,\ldots,-1,1,\ldots,n\}$.

Let $\bx$ be an element of $X(n+1)$. Write $\bx$ as a collection
of cycles as it is explained in Section \ref{SectionArrows}.
Assume that both $i$ and $j$ belong to the same cycle of $\bx$. We
check that the multiplication of $\bx$ by $(i,j)$ from the right
either splits this cycle into two, or transforms it into a
different cycle. Thus we have
$$
[\bx\cdot g]_{n+1}-[\bx]_{n+1}=1\;\;\mbox{or}\;\; 0.
$$
The value of the difference $[\bx\cdot g]_{n+1}-[\bx]_{n+1}$
depends on the mutual configuration of $-i, i, -j$, and $j$ in the
cycle containing $i, j$. More explicitly, if the pair with $-i$ is
situated from the left to the pair with $i$, and, at the same
time, the pair with $-j$ is situated from the left to the pair
with $j$, then the value of $[\bx\cdot g]_{n+1}-[\bx]_{n+1}$ is
$1$. In this case the cycle under considerations has the form
$$
k_1\rightarrow\ldots \rightarrow k_{m}\rightarrow -i \rightarrow
i\rightarrow-k_{m+1}\rightarrow\ldots\rightarrow
k_{p}\rightarrow-j\rightarrow j\rightarrow -k_{p+1}\rightarrow
\ldots \rightarrow -k_1\rightarrow k_1,
$$
or the form
$$
k_1\rightarrow\ldots \rightarrow k_{m}\rightarrow -j \rightarrow
j\rightarrow-k_{m+1}\rightarrow\ldots\rightarrow
k_{p}\rightarrow-i\rightarrow i\rightarrow -k_{p+1}\rightarrow
\ldots \rightarrow -k_1\rightarrow k_1,
$$
and the corresponding mutual configuration of $-i, i, -j$, and $j$
is
$$
\{.,-i\}\{i,.\}...\{.,-j\}\{j,.\},
$$
or
$$
\{.,-j\}\{j,.\}...\{.,-i\}\{i,.\}.
$$
If in the cycle under considerations the pair with $-i$ stands
from the right to the pair with $i$, and, at the same time, the
pair with $-j$ is situated from the right to the pair with $j$,
then the value of $[\bx\cdot g]_{n+1}-[\bx]_{n+1}$ is $1$ as well.
In this case the mutual configuration of $-i, i, -j$, and $j$ is
$$
\{.,i\}\{-i,.\}...\{.,j\}\{-j,.\},
$$
or
$$
\{.,j\}\{-j,.\}...\{.,i\}\{-i,.\}.
$$

Otherwise, if the mutual configuration of $-i, i, -j$, and $j$ is
different from those described above, then $[\bx\cdot
g]_{n+1}-[\bx]_{n+1}=0$.

On the other hand, the numbers $i$ and $j$ belong to the one and
the same cycle of $p_{n,n+1}(\bx)$ if and only if they belong to
one and the same cycle of $\bx$. Moreover, if $i$ and $j$ belong
to the same cycle of $\bx$, then the mutual configuration of $-i,
i, -j$, and $j$ is the same as in $p_{n,n+1}(\bx)$. Thus we
conclude that equation (\ref{pcompatibility}) holds true if $i$
and $j$ belong to the same cycle of $\bx$.

If $i$ and $j$ belong to different cycles then the two cycles of
$\bx$ containing the elements $i$ and $j$ merge into a single
cycle of the product $\bx\cdot (ij)$, and we clearly have
$$
[\bx\cdot (ij)]_{n+1}-[\bx]_{n+1}=-1.
$$
The same equation holds true if we replace $\bx$ by
$p_{n,n+1}(\bx)$, so equation (\ref{pcompatibility}) holds true
when $i$ and $j$ belong to different cycles as well.
\end{proof}
\subsection{Quasiinvariance of $\mu_t$}
\begin{prop} Each of measures $\mu_t$, $0<t<\infty$, is
quasiinvariant with respect to the action of $S(2\infty)$ on the
space $X=\varprojlim X(n)$. More precisely,
$$
\frac{\mu_t(d\bx\cdot g)}{\mu_t(d\bx)}=t^{c(\bx;g)};\;\;\bx\in
X,\; g\in S(2\infty),
$$
where $c(\bx;g)$ is the fundamental cocycle of Section
\ref{SectionCocycle}.
\end{prop}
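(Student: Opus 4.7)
The plan is to reduce the statement to a routine verification at each finite level $X(n)$ and then lift via the projective-limit construction of $\mu_t$.

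First I would unpack what the quasiinvariance claim says: interpreting the Radon--Nikodym derivative in the standard way, the assertion is equivalent to
$$
\mu_t(A\cdot g)=\int_A t^{c(\bx;g)}\,\mu_t(d\bx)
$$
for every measurable $A\subseteq X$. Because $\mu_t=\varprojlim\mu_t^{(n)}$, it suffices to verify this for cylinder sets $A=p_n^{-1}(A_n)$, where $A_n\subseteq X(n)$ and $n$ is chosen large enough that $g\in S(2n)$. For such $n$ the cocycle value $c(\bx;g)$ depends only on $p_n(\bx)$, by the very definition of $c$ in Section \ref{SectionCocycle}, and by Proposition \ref{Proposition1.444444} we have $A\cdot g=p_n^{-1}(A_n\cdot g)$.

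Second, the core computation is at finite level. From Definition \ref{DefinitionEwensMeasures}, for any $\bx_n\in X(n)$ and any $g\in S(2n)$,
$$
\frac{\mu_t^{(n)}(\bx_n\cdot g)}{\mu_t^{(n)}(\bx_n)}=t^{[\bx_n\cdot g]_n-[\bx_n]_n},
$$
since the normalization constants cancel. By definition of the cocycle, the exponent on the right equals $c(\bx;g)$ whenever $p_n(\bx)=\bx_n$. This is the entire content of the finite-level statement.

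Third, I combine these ingredients. Writing $\bx_n=p_n(\bx)$ and making the substitution $\bx_n=\by_n\cdot g$ inside the sum,
$$
\mu_t(A\cdot g)=\mu_t^{(n)}(A_n\cdot g)=\sum_{\by_n\in A_n}\mu_t^{(n)}(\by_n\cdot g)=\sum_{\by_n\in A_n}t^{[\by_n\cdot g]_n-[\by_n]_n}\mu_t^{(n)}(\by_n),
$$
and the last expression equals $\int_A t^{c(\bx;g)}\mu_t(d\bx)$ because the integrand is cylindrical of level $n$.

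There is no real obstacle; the only delicate point is bookkeeping, namely making sure that ``$n$ large enough'' can be chosen uniformly in $\bx\in X$ once $g$ is fixed (which is exactly what Section \ref{SectionCocycle} established) and that the cylinder-set reduction is legitimate, for which one invokes $\pi$--$\lambda$ or simply the definition of the projective-limit measure on basic cylinders. In particular, the proof does not use any new property of $g$ beyond the equivariance of $p_{n,n+1}$ and the fact that $\mu_t^{(n)}(\bx_n)$ is a monomial in $t$ whose exponent is the cycle count.
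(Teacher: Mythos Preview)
Your proposal is correct and follows essentially the same approach as the paper: reduce to cylinder sets, use equivariance of $p_n$ (Proposition~\ref{Proposition1.444444}) together with the fact that $c(\bx;g)$ depends only on $p_n(\bx)$, and then verify the identity at finite level directly from Definition~\ref{DefinitionEwensMeasures}. The only cosmetic difference is that the paper works with singleton cylinders $V_n(\by)=p_n^{-1}(\by)$ rather than general $p_n^{-1}(A_n)$, which amounts to the same thing.
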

\begin{proof}
We need to check that
\begin{equation}\label{1.7-1.7.2}
\mu_t(V\cdot g)=\int_{V}t^{c(\bx;g)}\mu_t(d\bx),\;\; g\in
S(2\infty)
\end{equation}
for every Borel subset $V\subseteq X$. Choose $m$ so large that
$g\in S(2m)$, and let $n\geq m$. Take $\by\in X(n)$, and set
$V_n(\by)=p_n^{-1}(\by)\subset X$. This is a cylinder set. It is
enough to check equation (\ref{1.7-1.7.2}) for $V=V_n(\by)$.  Note
that $V_n(\by)\cdot g=V_n(\by\cdot g)$. This follows from the fact
that the projection $p_n$ is equivariant with respect to the right
action of the group, see Proposition \ref{Proposition1.444444}.
From the definition of $\mu_t$ we conclude that
$\mu_t(V_n(\by))=\mu_t^{(n)}(\{\by\})$, hence
$$
\mu_t\left(V_n(g)\cdot g\right)=\mu_t^{(n)}(\{\by\cdot g\}).
$$
On the other hand,
$$
c(\bx;g)=[p_n(\bx\cdot g)]_n-[p_n(\bx)]_n=[\by\cdot g]_n-[\by]_n
$$
for all $\bx\in V_n(y)$. Therefore, equation (\ref{1.7-1.7.2})
takes the form
$$
\mu_t^{(n)}\left(\left\{\by\cdot g\right\}\right)=t^{[\by\cdot
g]_n-[\by]_n}\mu_t^{(n)}\left(\{\by\}\right).
$$
Using the very definition of $\mu_t^{(n)}$ we check that the
equation just written above holds true. Therefore, equation
(\ref{1.7-1.7.2}) holds true as well.
\end{proof}
\section{The representations $T_{z,\frac{1}{2}}$}

The aim of this Section is to introduce a family
$T_{z,\frac{1}{2}}$ of unitary representations of the group
$S(2\infty)$. These representations are parameterized by points
$z\in \C\setminus\{0\}$, and can be viewed as the analogues of the
generalized regular representations introduced in Kerov,
Olshanski, and Vershik \cite{KOV1, KOV2}. As in the case of the
generalized regular representations, each element of the family
$T_{z,\frac{1}{2}}$ can be approximated by the regular
representation of the group $S(2n)$. This enables us to give an
explicit formula for the restriction of the spherical function of
the representation $T_{z,\frac{1}{2}}$ to $S(2n)$, and to
introduce the measures on Young diagrams associated with
representations $T_{z,\frac{1}{2}}$. Then it will be shown that
these measures can be understood as the $z$-measures with the Jack
parameter $\theta=\frac{1}{2}$ in the notation of Section
\ref{Sectionztheta}. Thus the $z$-measures with the Jack parameter
$\theta=\frac{1}{2}$ will be associated to representations
$T_{z,\frac{1}{2}}$ in a similar way as the $z$-measures with the
Jack parameter $\theta=1$ are associated with generalized regular
representations in Kerov, Olshanski, and Vershik \cite{KOV2},
Section 4.
\subsection{Definition of $T_{z,\frac{1}{2}}$}

Let $(\X,\Sigma,\mu)$ be a measurable space. Let $G$ be a group
which acts on $\X$ from the right, and preserves the Borel
structure. Assume that the measure $\mu$ is quasiinvariant, i.e.
the condition
$$
d\mu(\bx\cdot g)=\delta(\bx;g)d\mu(\bx)
$$
is satisfied for some nonnegative $\mu$-integrable function
$\delta(\bx;g)$ on $\X$, and for every $g$, $g\in G$. Set
\begin{equation}\label{3.1-3.1}
\left(T(g) f\right)(\bx)=\tau(\bx;g)f(\bx\cdot g),\; f\in
L^{2}(\X,\mu),
\end{equation}
where $|\tau(\bx;g)|^2=\delta(\bx;g)$. If
$$\tau(\bx;g_1g_2)=\tau(\bx\cdot
g_1;g_2)\tau(\bx;g_1),\; \bx\in\X, g_1,g_2\in G,
$$
then equation (\ref{3.1-3.1}) defines a unitary representation $T$
of $G$ acting in the Hilbert space $L^2(\X;\mu)$. The function
$\tau(\bx;g)$ is called a multiplicative cocycle.

Let $z\in\C$ be a nonzero complex number. We apply the general
construction described above for the space $\X=X$, the group
$G=S(2\infty)$, the measure $\mu=\mu_t$ (where $t=|z|^2$), and the
cocycle $\tau(\bx;g)=z^{c(\bx;g)}$. In this way we get a unitary
representation of $S(2\infty)$, $T_{z,\frac{1}{2}}$, acting in the
Hilbert space $L^2(X,\mu_t)$ according to the formula
$$
\left(T_{z,\frac{1}{2}}(g)f\right)(\bx)=z^{c(\bx;g)}f(\bx\cdot
g),\; f\in L^2(X,\mu_t),\;\bx\in X,\; g\in S(2\infty).
$$
\subsection{Approximation by quasi-regular representations}

\begin{defn} For $n=1,2,\ldots $ let $\mu_1^{(n)}$ denote the
normalized Haar measure on $X(n)$. The regular representation
$Reg^n$ of the group $S(2n)$ acting in the Hilbert space
$L^{(2)}(X(n),\mu_1^{(n)})$ is defined by
$$
\left(Reg^n(g)f\right)(\bx)=f(\bx\cdot g),\; \bx\in X(n),\; g\in
S(2n),\; f\in L^2(X(n),\mu_t).
$$
\end{defn}
\begin{prop}\label{Proposition3.3-3.2} The representations $Reg^n$ and
$T_{z,\frac{1}{2}}|_{L^{2}(X(n),\mu_t^{(n)})}$ of $S(2n)$ are
equivalent.
\end{prop}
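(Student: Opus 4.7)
The plan is to construct an explicit unitary intertwiner
$U : L^2(X(n), \mu_t^{(n)}) \to L^2(X(n), \mu_1^{(n)})$
implementing the equivalence. First I would unpack what the restriction of $T_{z,\frac{1}{2}}$ to $L^2(X(n),\mu_t^{(n)})$ even means: for $g \in S(2n)$, the cocycle $c(\bx; g)$ from Section \ref{SectionCocycle} depends on $\bx \in X$ only through $p_n(\bx)$, and the right action of $g$ commutes with $p_n$ by Proposition \ref{Proposition1.444444}. Hence the pullback map $f \mapsto f \circ p_n$ identifies $L^2(X(n),\mu_t^{(n)})$ isometrically with a $T_{z,\frac{1}{2}}(g)$-invariant subspace of $L^2(X,\mu_t)$, on which the action becomes
$$
(T_{z,\frac{1}{2}}(g) f)(\by) = z^{[\by \cdot g]_n - [\by]_n} f(\by \cdot g), \qquad \by \in X(n),\; g \in S(2n).
$$

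The core idea is to absorb the $z$-twist into a multiplication operator. I would guess
$$
(Uf)(\by) = C \cdot z^{[\by]_n} f(\by),
$$
for a positive constant $C$, motivated by the telescoping identity $z^{[\by\cdot g]_n} = z^{[\by\cdot g]_n - [\by]_n} \cdot z^{[\by]_n}$, which is exactly what is needed to convert the twisted action into the plain regular one.

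Two things must then be checked. For unitarity, one needs $|C z^{[\by]_n}|^2 \mu_1^{(n)}(\by) = \mu_t^{(n)}(\by)$ pointwise on $X(n)$; using Definition \ref{DefinitionEwensMeasures}, with $\mu_1^{(n)}$ being the uniform measure $1/(1 \cdot 3 \cdots (2n-1))$ (consistent with plugging $t=1$ into the Ewens formula), the $[\by]_n$-dependence cancels and one is forced to take
$$
|C|^2 = \frac{1 \cdot 3 \cdots (2n-1)}{t(t+2) \cdots (t+2n-2)},
$$
any positive square root of which works. For the intertwining identity $Reg^n(g) \circ U = U \circ T_{z,\frac{1}{2}}(g)$, a direct computation shows that both sides, applied to $f$ and evaluated at $\by$, equal $C\, z^{[\by \cdot g]_n} f(\by \cdot g)$.

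I do not anticipate any serious obstacle. The structure of the problem, a twisted action by a multiplicative cocycle paired with a Radon--Nikodym derivative governed by the same exponent $[\,\cdot\,]_n$, essentially forces the multiplication-operator intertwiner, and both verifications collapse to one-line manipulations. The only mild point of care is to make sure the bookkeeping between $t = |z|^2$ in the measure and $z$ in the cocycle matches up properly in the unitarity calculation, which it does by construction of $T_{z,\frac{1}{2}}$.
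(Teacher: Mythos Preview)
Your proposal is correct and essentially identical to the paper's proof: the paper defines $F_z^{(n)}(\bx)=\left(\frac{1\cdot 3\cdots(2n-1)}{t(t+2)\cdots(t+2n-2)}\right)^{1/2}z^{[\bx]_n}$ and takes the multiplication operator $f_z^{(n)}$ by $F_z^{(n)}$ as the intertwiner, noting that $|F_z^{(n)}(\bx)|^2=\mu_t^{(n)}(\bx)/\mu_1^{(n)}(\bx)$ gives the isometry and that the intertwining relation is straightforward. Your $U$ with the constant $C$ is exactly this operator, and your two checks are the same as the paper's.
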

\begin{proof} Set
\begin{equation}\label{3.3-3.2.1}
F_z^{(n)}(\bx)=\left(\frac{1\cdot 3\cdot \ldots \cdot
(2n-1)}{t\cdot (t+2)\cdot\ldots\cdot(t+2n-2)
}\right)^{1/2}z^{[\bx]_n},\; \bx\in X(n),
\end{equation}
 and denote by $f_z^{(n)}$ the operator
of multiplication by $F_z^{(n)}$. Since
$$
|F_z^{(n)}(\bx)|^2=\frac{\mu_t^{(n)}(\bx)}{\mu_1^{(n)}(\bx)},
$$
the operator $f_z^{(n)}$ carries $L^2(X(n),\mu_t^{(n)})$ onto
$L^2(X(n),\mu_1^{(n)})$, and defines an isometry. Moreover, it is
straightforward to check that $f_z^{(n)}$ intertwines for the
$S(2n)$-representations $Reg^n$ and
$T_{z,\frac{1}{2}}|_{L^{2}(X(n),\mu_t^{(n)})}$.
\end{proof}
Next we need the notion of the inductive limits of
representations. Let $G(1)\subseteq G(2)\subseteq\ldots $ be a
collection of finite groups, and set $G=\bigcup_{n=1}^{\infty}
G(n)$. Thus $G$ is the inductive limit of the groups $G(n)$.
Assume that for each $n$ a unitary representation $T_n$ of $G(n)$
is defined. Denote by $H(T_n)$ the Hilbert space in which the
representation $T_n$ acts, and denote by $H$ the Hilbert
completion of the space $\bigcup_{n=1}^{\infty}H(T_n)$. We also
assume that an isometric embedding $\alpha_n: H(T_n)\rightarrow
H(T_{n+1})$ is given, and that this embedding is intertwining for
the $G(n)$-representations $T_n$ and $T_{n+1}|_{G(n)}$.
\begin{defn}
A unitary representation $T$ of the group $G$ acting in the
Hilbert space $H$, and uniquely defined by
$$
T(g)\xi=T_n(g)\xi,\;\mbox{if}\; g\in G(n)\;\mbox{and}\;\xi\in
H(T_n)
$$
is called the inductive limit of representations $\{T_n\}$.
\end{defn}
Consider the following diagram
$$
\begin{array}{cccccccccc}
H(T_1)&\stackrel{f_1}{\longrightarrow}& H(T_2) &
\stackrel{f_2}{\longrightarrow}& H(T_3)
&\stackrel{f_3}{\longrightarrow}&\ldots\\
\vert &&\vert &&\vert \\
\vert\lefteqn{F_1}&& \vert\lefteqn{F_2}&& \vert\lefteqn{F_3}\\
\downarrow && \downarrow && \downarrow\\
H(S_1)&\stackrel{\rho_1}{\longrightarrow}& H(S_2) &
\stackrel{\rho_2}{\longrightarrow}& H(S_3)
&\stackrel{\rho_3}{\longrightarrow}&\ldots
\end{array}
$$
Here $\{T_n\}_{n=1}^{\infty}$ and $\{S_n\}_{n=1}^{\infty}$ are
collections of representations of $G(1), G(2),\ldots $, and $
G(1)\subseteq G(2)\subseteq\ldots $. The following  fact is almost
obvious, and we formulate it as a Proposition without proof.
\begin{prop}\label{PropositionA1}
Assume that for each $n=1,2,\ldots $ the following conditions are
satisfied\begin{itemize}
    \item The linear map $F_n$ is from $H(T_n)$ onto $H(S_n)$, which is intertwining for $T_n$
    and $S_n$.
    \item The linear map $f_n$ is an isometric embedding of
    $H(T_n)$ into $H(T_{n+1})$, which is intertwining for the
    $G(n)$-representations $T_n$ and $T_{n+1}|_{G(n)}$.
    \item The map $\rho_n$ is an isometric embedding of $H(S_n)$
    into $H(S_{n+1})$ such that the diagram
    $$
\begin{array}{ccc}
H(T_n)&\stackrel{f_n}{\longrightarrow}& H(T_n)\\
\vert  &&\vert \\
\vert\lefteqn{F_n}&& \vert\lefteqn{F_{n+1}}\\
\downarrow &&  \downarrow\\
H(S_n)&\stackrel{\rho_n}{\longrightarrow}& H(S_{n+1})
\end{array}
$$
is commutative, i.e., the condition
$f_n=F_{n+1}^{-1}\circ\rho_n\circ F_n$ holds true.
\end{itemize}
Then the inductive limits of $\{T_n\}_{n=1}^{\infty}$, and of
$\{S_n\}_{n=1}^{\infty}$ are well-defined, and these inductive
limits are equivalent.
\end{prop}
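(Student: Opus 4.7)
The plan is a straightforward gluing argument: the commutative diagram is precisely the compatibility condition that lets one transport the inductive system on the $T$-side to the $S$-side by a single unitary.

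First I would use the relation $f_n = F_{n+1}^{-1}\circ\rho_n\circ F_n$ in its equivalent rewritten form $\rho_n = F_{n+1}\circ f_n\circ F_n^{-1}$. This exhibits $\rho_n$ as a composition of three $G(n)$-intertwiners: $F_n^{-1}$ intertwines $S_n$ with $T_n$, $f_n$ intertwines $T_n$ with $T_{n+1}|_{G(n)}$, and $F_{n+1}$ intertwines $T_{n+1}|_{G(n)}$ with $S_{n+1}|_{G(n)}$. Hence $\rho_n$ intertwines $S_n$ with $S_{n+1}|_{G(n)}$, which is exactly the missing ingredient needed for the inductive limit $S:=\varinjlim S_n$ to be defined in the sense of the definition preceding the proposition. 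The inductive limit $T:=\varinjlim T_n$ is defined by hypothesis.

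Next I would construct the equivalence. Set $H(T):=\overline{\bigcup_n H(T_n)}$ and $H(S):=\overline{\bigcup_n H(S_n)}$, where the closures are taken with respect to the unions identified via the isometric embeddings $f_n$ and $\rho_n$. Define an operator $F$ on the dense subspace $\bigcup_n H(T_n)\subset H(T)$ by $F\xi := F_n\xi$ whenever $\xi\in H(T_n)$. The commutativity $\rho_n\circ F_n = F_{n+1}\circ f_n$ shows that if $\xi\in H(T_n)$ is re-identified as an element of $H(T_{n+1})$ via $f_n$, then its image under $F_{n+1}$ coincides with the image of $F_n\xi$ under $\rho_n$; thus $F$ is well-defined and isometric on the dense subspace, and extends uniquely to a unitary operator $F:H(T)\to H(S)$. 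Surjectivity follows because each $F_n$ is onto $H(S_n)$ and the union $\bigcup_n H(S_n)$ is dense in $H(S)$.

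Finally I would check the intertwining property. Fix $g\in G$ and pick $n$ so large that $g\in G(n)$. For any $\xi\in H(T_m)$ with $m\geq n$, the definition of the inductive limit gives $T(g)\xi = T_m(g)\xi$ and $S(g)F\xi = S_m(g)F_m\xi$; since $F_m$ intertwines $T_m$ and $S_m$, we have $F(T(g)\xi) = F_m T_m(g)\xi = S_m(g)F_m\xi = S(g)F\xi$. The identity extends from the dense subspace to all of $H(T)$ by continuity, proving $F\circ T(g) = S(g)\circ F$ for every $g\in G$ and completing the proof. There is no serious obstacle here; the content of the proposition is the bookkeeping encoded in the commutative diagram, and the argument is purely functorial.
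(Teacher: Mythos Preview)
The paper does not prove this proposition; it says explicitly ``The following fact is almost obvious, and we formulate it as a Proposition without proof.'' Your argument supplies exactly the routine verification the paper omits: you deduce from the commutative square that $\rho_n$ intertwines $S_n$ with $S_{n+1}|_{G(n)}$ (so the $S$-side inductive limit exists), glue the $F_n$ into a single operator $F$ well-defined on the dense union by commutativity, and check the intertwining relation on that dense set. This is the standard and expected proof.

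One small remark: when you assert that $F$ is isometric you are using that each $F_n$ is an isometry, whereas the hypotheses as literally stated only say $F_n$ is a linear intertwiner from $H(T_n)$ onto $H(S_n)$ (invertibility is implicit since $F_{n+1}^{-1}$ appears in the third bullet). In the paper's actual application the $F_n$ are the multiplication operators $f_z^{(n)}$, which are shown to be isometries, and ``equivalent'' is clearly meant in the unitary sense; so this is an imprecision in the statement of the proposition rather than a gap in your reasoning.
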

\begin{prop}
Define the operator $L_z^{(n)}$,
$$ L_z^{(n)}:
L^{2}(X(n),\mu_1^{(n)})\longrightarrow L^{2}(X(n+1),\mu_1^{(n)})
$$
as follows: if $f\in L^{2}(X(n),\mu_1^{(n)})$, and $\bx\in
X(n+1)$, then
\begin{equation}\label{Lz}
\left(L_z^{(n)}f\right)(\bx)=
\left\{%
\begin{array}{lll}
    z\sqrt{\frac{2n+1}{2n+t}}f(\bx), & \bx\in X(n)\subset X(n+1), \\
    \\
    \sqrt{\frac{2n+1}{2n+t}}f(p_{n,n+1}(\bx)), & \bx\in X(n+1)\setminus X(n).\\
\end{array}%
\right.
\end{equation}
For any nonzero complex number $z$ the operator $L_z^{(n)}$
provides an isometric embedding
$L^{2}(X(n),\mu_1^{(n)})\longrightarrow L^{2}(X(n+1),\mu_1^{(n)})$
which intertwines for the $S(2n)$-representations $Reg^n$ and
$Reg^{n+1}|_{S(2n)}$. Let $T_{z,\frac{1}{2}}'$ denote the
inductive limit of the representations $Reg^n$ with respect to the
embedding
$$
\begin{array}{ccccc}
L^{2}(X(1),\mu_1^{(1)})&\stackrel{L_z^{(1)}}{\longrightarrow}&
L^{2}(X(2),\mu_2^{(2)})&\stackrel{L_z^{(2)}}{\longrightarrow}&\ldots
\end{array}
$$
Then the representations $T_{z,\frac{1}{2}}'$ and
$T_{z,\frac{1}{2}}$ are equivalent.
\end{prop}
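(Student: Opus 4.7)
The plan is to prove the proposition in two stages: first establish that $L_z^{(n)}$ is an isometric embedding intertwining the $S(2n)$-representations $Reg^n$ and $Reg^{n+1}|_{S(2n)}$, and then apply Proposition \ref{PropositionA1} to identify the resulting inductive limit with $T_{z,\frac{1}{2}}$. For the isometry I use the canonical splitting $X(n+1) = X(n) \sqcup \bigl(X(n+1)\setminus X(n)\bigr)$, the first piece consisting of the pairings containing the block $\{-n-1,n+1\}$. Over each $\by \in X(n)$ the fiber of $p_{n,n+1}$ has exactly $2n+1$ points: one in $X(n)$ (obtained by adjoining $\{-n-1,n+1\}$ to $\by$) and $2n$ in the complement (for each of the $n$ pairs $\{i,j\}$ of $\by$ there are two ways to insert $\pm(n+1)$). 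Using $|z|^2 = t$ and $|X(n+1)| = (2n+1)|X(n)|$, the defining formula (\ref{Lz}) gives $\|L_z^{(n)}f\|^2 = \frac{t+2n}{2n+t}\|f\|^2 = \|f\|^2$. The intertwining is then immediate: $g\in S(2n)$ fixes $\pm(n+1)$ and so preserves the above splitting, and $p_{n,n+1}$ is equivariant under $S(2n)$ by the preceding proposition.

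For the identification of the inductive limit I apply Proposition \ref{PropositionA1} with $H(T_n)=L^2(X(n),\mu_1^{(n)})$, $f_n=L_z^{(n)}$ and $H(S_n)=L^2(X(n),\mu_t^{(n)})$, $\rho_n$ equal to the pullback $h\mapsto h\circ p_{n,n+1}$. That $\rho_n$ is isometric follows from the projective property of $\mu_t$ shown in Proposition \ref{PROPOSITION4.2}. The inductive limit of the $S_n=T_{z,\frac{1}{2}}|_{S(2n)}$ along the $\rho_n$ is identified with $T_{z,\frac{1}{2}}$ on $L^2(X,\mu_t)$: the cylinder functions $h\circ p_n$ exhaust a dense subspace, and the formula $c(\bx;g)=[p_n(\bx)\cdot g]_n-[p_n(\bx)]_n$ for $g\in S(2n)$ shows that this subspace is $S(2n)$-invariant with $T_{z,\frac{1}{2}}$ acting exactly as $T_{z,\frac{1}{2}}|_{L^2(X(n),\mu_t^{(n)})}$. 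The intertwiner $F_n\colon H(T_n)\to H(S_n)$ is taken to be the inverse of the multiplication operator $f_z^{(n)}$ from Proposition \ref{Proposition3.3-3.2}, i.e.\ multiplication by $1/F_z^{(n)}$; this is the unitary intertwining $Reg^n$ with $T_{z,\frac{1}{2}}|_{L^2(X(n),\mu_t^{(n)})}$.

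The main computation, and the principal obstacle, is the commutativity of the square in Proposition \ref{PropositionA1}, which reduces to the pointwise identity $F_z^{(n+1)} = L_z^{(n)}\bigl(F_z^{(n)}\bigr)$ on $X(n+1)$. I verify this case by case along the same dichotomy used for the isometry: if $\bx'\in X(n)\subset X(n+1)$ then $[\bx']_{n+1}=[p_{n,n+1}(\bx')]_n+1$, because the adjoined pair $\{-n-1,n+1\}$ forms its own cycle, and the extra factor of $z$ in the first case of (\ref{Lz}) precisely absorbs this cycle increment; if $\bx'\notin X(n)$ then $[\bx']_{n+1}=[p_{n,n+1}(\bx')]_n$, since deleting $\pm(n+1)$ merely shortens the host cycle without splitting it, and no factor of $z$ is needed. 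The remaining delicacy is the matching of the square-root normalisation constants in $F_z^{(n)}$, $F_z^{(n+1)}$ and the prefactor $\sqrt{(2n+1)/(2n+t)}$ in (\ref{Lz}): these telescope against the product $t(t+2)\cdots(t+2n-2)$ in the denominator of (\ref{3.3-3.2.1}) exactly so that both cases of the identity hold, completing the verification and hence the equivalence $T_{z,\frac{1}{2}}'\simeq T_{z,\frac{1}{2}}$.
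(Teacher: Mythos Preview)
Your proof is correct and follows essentially the same strategy as the paper: use the unitaries $f_z^{(n)}$ of Proposition~\ref{Proposition3.3-3.2} together with the pullback embeddings $\alpha^{(n)}\colon h\mapsto h\circ p_{n,n+1}$ to fit $Reg^n$ and $T_{z,\frac12}|_{L^2(X(n),\mu_t^{(n)})}$ into a commutative ladder, then invoke Proposition~\ref{PropositionA1}. The only difference is the logical order. The paper \emph{defines} $L_z^{(n)}$ by forcing the square to commute, i.e.\ sets $L_z^{(n)}=f_z^{(n+1)}\circ\alpha^{(n)}\circ (f_z^{(n)})^{-1}$, reads off formula~(\ref{Lz}) from this, and gets the isometry and intertwining of $L_z^{(n)}$ for free as a composition of isometric intertwiners. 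You instead take~(\ref{Lz}) as given, verify the isometry and intertwining by the direct fiber count, and then check the square commutes via the identity $F_z^{(n+1)}=L_z^{(n)}(F_z^{(n)})$. Both routes amount to the same computation of the ratio $F_z^{(n+1)}(\bx')/F_z^{(n)}(p_{n,n+1}(\bx'))$; the paper's ordering is slightly more economical since it avoids the separate isometry check. One small point worth making explicit in your write-up: the reduction of the full commutativity to the single identity $F_z^{(n+1)}=L_z^{(n)}(F_z^{(n)})$ works because both $L_z^{(n)}$ and $f_z^{(n+1)}\circ\alpha^{(n)}\circ (f_z^{(n)})^{-1}$ are of the form $g\mapsto m(\bx')\,g(p_{n,n+1}(\bx'))$, so agreement on one nowhere-vanishing test function forces agreement of the multipliers.
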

\begin{proof}
For $f\in L^{2}(X(n),\mu_t^{(n)})$, and $\bx\in X(n+1)$ set
$$
\left(\alpha^{(n)}f\right)(\bx)=f(p_{n,n+1}(\bx)).
$$
Then $\alpha^{(n)}$ is an isometric embedding of
$L^{2}(X(n),\mu_t^{(n)})$ into $L^{2}(X(n+1),\mu_t^{(n+1)})$.
Using the definition of the representation $T_{z,\frac{1}{2}}$ it
is straightforward to verify that $\alpha^{(n)}$ intertwines for
the $S(2n)$-representations
$T_{z,\frac{1}{2}}|_{L^{2}(X(n),\mu_t^{(n)})}$ and
$T_{z,\frac{1}{2}}|_{L^{2}(X(n+1),\mu_t^{(n+1)})}$. This enables
us to consider $T_{z,\frac{1}{2}}$ as the inductive limit of
$S(2n)$-representations of
$T_{z,\frac{1}{2}}|_{L^{2}(X(n),\mu_t^{(n)})}$. Now examine the
following diagram
$$
\begin{array}{cccccccccc}
L^{2}(X(1),\mu_t^{(1)})&\stackrel{\alpha^{(1)}}{\longrightarrow}&
L^{2}(X(2),\mu_t^{(2)}) &
\stackrel{\alpha^{(2)}}{\longrightarrow}& L^{2}(X(3),\mu_t^{(3)})
&\stackrel{\alpha^{(3)}}{\longrightarrow} &\ldots\\
\vert &&\vert &&\vert \\
\vert\lefteqn{f_z^{(1)}}&& \vert\lefteqn{f_z^{(2)}}&& \vert\lefteqn{f_z^{(3)}}\\
\downarrow && \downarrow && \downarrow\\
L^{2}(X(1),\mu_1^{(1)})&\stackrel{L^{(1)}_z}{\longrightarrow}&
L^{2}(X(2),\mu_1^{(2)}) & \stackrel{L^{(2)}_z}{\longrightarrow}&
L^{2}(X(3),\mu_1^{(3)}) &\stackrel{L^{(3)}_z}{\longrightarrow}
&\ldots
\end{array}
$$
where the operators $f_z^{(n)}$ are that of multiplications by
$F_z^{(n)}$ introduced in the proof of Proposition
\ref{Proposition3.3-3.2}. Recall that $f_z^{(n)}$ intertwines for
the $S(2n)$-representations $Reg^n$ and
$T_{z,\frac{1}{2}}|_{L^{2}(X(n),\mu_t^{(n)})}$. We determine
$L_z^{(n)}$ from the condition of commutativity of the diagram
$$
\begin{array}{ccc}
L^{2}(X(n),\mu_t^{(n)}) &
\stackrel{\alpha^{(n)}}{\longrightarrow}&
L^{2}(X(n+1)),\mu_t^{(n+1)})\\
\vert &&\vert \\
\vert\lefteqn{f_z^{(n)}}&& \vert\lefteqn{f_z^{(n)}}\\
\downarrow  && \downarrow\\
L^{2}(X(n),\mu_1^{(n)})&\stackrel{L^{(n+1)}_z}{\longrightarrow}&
L^{2}(X(n+1),\mu_1^{(n+1)})
\end{array}
$$
and obtain that $L^{(n)}_z$ is given by formula (\ref{Lz}).
Moreover, from equation (\ref{Lz}) we see that $L_z^{(n)}$ defines
the isometric embedding of $L^{2}(X(n),\mu_t^{(n)})$ into
$L^{2}(X(n+1),\mu_t^{(n+1)})$. Now we use Proposition
\ref{PropositionA1}  to conclude that the inductive limit
$T_{z,\frac{1}{2}}'$ of the representations $Reg^n$ with respect
to the embedding
$$
\begin{array}{ccccc}
L^{2}(X(1),\mu_1^{(1)})&\stackrel{L_z^{(1)}}{\longrightarrow}&
L^{2}(X(2),\mu_2^{(2)})&\stackrel{L_z^{(2)}}{\longrightarrow}&\ldots
\end{array}
$$
is well-defined, and it is equivalent to $T_{z,\frac{1}{2}}$.
\end{proof}
\subsection{A formula for the spherical function of $T_{z,\frac{1}{2}}$}

Let $(G,K)$ be a Gelfand pair, and let $T$ be a unitary
representation of $G$ acting in the Hilbert space $H(T)$. Assume
that $\xi$ is a unit vector in $H(T)$ such that $\xi$ is
$K$-invariant, and such that the span of vectors of the form
$T(g)\xi$ (where $g\in G$) is dense in $H(T)$. In this case $\xi$
is called the spherical vector, and the matrix coefficient
$(T(g)\xi,\xi)$ is called the spherical function of the
representation $T$. Two spherical representations are equivalent
if and only if their spherical functions are coincide.
\begin{prop}
Denote by $\varphi_z$ the spherical function of
$T_{z,\frac{1}{2}}$. Then we have
\begin{equation}\label{sphericalfunctionTz}
\varphi_z|_{S(2n)}(g)=\left(Reg^n(g)F_z^{(n)},F^{(n)}_z\right)_{L^{2}(X(n),\mu_1^{(n)})}.
\end{equation}
\end{prop}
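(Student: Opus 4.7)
My plan is to reduce both sides of (\ref{sphericalfunctionTz}) to the same explicit finite sum over $X(n)$ and verify they coincide. I take for granted that the spherical vector of $T_{z,\frac{1}{2}}$ is the constant function $\textbf{1}\in L^2(X,\mu_t)$ (the $H(\infty)$-invariance of $\textbf{1}$ is equivalent to the statement $c(\bx;g)=0$ for $g\in H(\infty)$, which may be read off from the arrow-configuration picture of Section \ref{SectionArrows}, since elements of $H(n)$ simply permute the points on the circles and flip arrow directions, neither of which alters the cycle count). Unfolding the definition of $T_{z,\frac{1}{2}}$ then gives
$$
\varphi_z(g)=(T_{z,\frac{1}{2}}(g)\textbf{1},\textbf{1})_{L^2(X,\mu_t)}=\int_X z^{c(\bx;g)}\,d\mu_t(\bx).
$$

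For the left-hand side of (\ref{sphericalfunctionTz}), fix $g\in S(2n)$. The cocycle $c(\bx;g)=[p_n(\bx)\cdot g]_n-[p_n(\bx)]_n$ factors through the projection $p_n:X\to X(n)$, so the compatibility $(p_n)_*\mu_t=\mu_t^{(n)}$ of Proposition \ref{PROPOSITION4.2}(b) collapses the integral to
$$
\varphi_z(g)\bigl|_{S(2n)}=\sum_{\by\in X(n)}\mu_t^{(n)}(\by)\,z^{[\by\cdot g]_n-[\by]_n}.
$$

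For the right-hand side, I would expand $(Reg^n(g)F_z^{(n)},F_z^{(n)})_{L^2(X(n),\mu_1^{(n)})}$ using the explicit form $F_z^{(n)}(\by)=C_n\,z^{[\by]_n}$ from (\ref{3.3-3.2.1}), with $C_n^2=(1\cdot 3\cdots(2n-1))/(t(t+2)\cdots(t+2n-2))$. Applying the elementary identity $z^a\bar z^b=z^{a-b}t^b$ (valid since $z\neq 0$ and $t=|z|^2$) yields
$$
(Reg^n(g)F_z^{(n)},F_z^{(n)})_{L^2(X(n),\mu_1^{(n)})}=\sum_{\by\in X(n)}\mu_1^{(n)}(\by)\,C_n^2\,t^{[\by]_n}\,z^{[\by\cdot g]_n-[\by]_n}.
$$
The final step is the normalization identity $\mu_1^{(n)}(\by)\,|F_z^{(n)}(\by)|^2=\mu_t^{(n)}(\by)$, already used in the proof of Proposition \ref{Proposition3.3-3.2}; substituting it turns the above sum into exactly the one obtained for $\varphi_z(g)|_{S(2n)}$, establishing (\ref{sphericalfunctionTz}).

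The argument is in essence bookkeeping, so I do not anticipate any serious obstacle. The one point where care is required is ensuring that $c(\bx;g)$ really is constant on the fibers of $p_n$ for $g\in S(2n)$, which however is immediate from the definition of the fundamental cocycle in Section \ref{SectionCocycle}; everything else is a matter of aligning the normalizing constants in $F_z^{(n)}$ with those in $\mu_t^{(n)}$.
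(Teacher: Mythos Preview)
Your argument is correct. The paper's proof reaches the same conclusion by a slightly more conceptual route: it observes that the constant function $f_0\equiv 1$ is the spherical (cyclic, $H(n)$-invariant) vector for $T_{z,\frac{1}{2}}\big|_{L^2(X(n),\mu_t^{(n)})}$, and then invokes the intertwining isometry $f_z^{(n)}$ of Proposition~\ref{Proposition3.3-3.2}, which carries $f_0$ to $F_z^{(n)}$ and hence transports the matrix coefficient $(T_{z,\frac{1}{2}}(g)f_0,f_0)$ to $(Reg^n(g)F_z^{(n)},F_z^{(n)})$ in one stroke. Your proof instead unwinds both sides to the common finite sum $\sum_{\by\in X(n)}\mu_t^{(n)}(\by)\,z^{[\by\cdot g]_n-[\by]_n}$ using the projective-limit compatibility and the identity $\mu_1^{(n)}\,|F_z^{(n)}|^2=\mu_t^{(n)}$; this is exactly the computation hidden inside the intertwiner, so the two arguments are equivalent in content, yours being the explicit version.
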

\begin{proof}
Let $f_0\equiv 1$ be a unit vector, and let us consider $f_0$ as
an element of $L^{2}(X(n),\mu_t^{(n)})$. Then we find
$$
\left(T_{z,\frac{1}{2}}(g)f_0\right)(\bx)=z^{c(\bx;g)},\; \bx\in
X(n),\; g\in S(2n).
$$
If $g\in H(n)$, then $c(\bx;g)=0$. In this case we obtain that
$f_0$ is invariant under the action of $H(n)$, so $f_0$ can be
understood as the cyclic vector of the $S(2n)$-representation
$T_{z,\frac{1}{2}}|_{L^{2}(X(n),\mu_t^{(n)})}$. On the other hand,
the $S(2n)$-representation
$T_{z,\frac{1}{2}}|_{L^{2}(X(n),\mu_t^{(n)})}$ is equivalent to
$Reg^n$. This representation, $Reg^n$, acts in the space
$L^{2}(X(n),\mu_1^{(n)})$, and from the proof of Proposition
\ref{Proposition3.3-3.2} we conclude that the cyclic vector of the
$S(2n)$-representation $Reg^n$ is $F_z^{(n)}$ defined by formula
(\ref{3.3-3.2.1}). This gives expression for the spherical
function of $T_{z,\frac{1}{2}}$ in the statement of the
Proposition.
\end{proof}
\section{Definition of $z$-measures associated with the representations $T_{z,\frac{1}{2}}$}

\subsection{The space $C(S(2n), H(n))$}
Consider the set of functions on $S(2n)$ constant on each double
coset $H(n)gH(n)$ in $S(2n)$. We shall denote this set by
$C(S(2n),H(n))$. Therefore,
$$
C(S(2n),H(n))=\left\{f|f(hgh')=f(g),\;\mbox{where}\; h, h'\in
H(n),\;\; \mbox{and}\;\; g\in S(2n)\right\}.
$$
We equip $C(S(2n),H(n))$ with the scalar product $<.,>_{S(2n)}$
defined by
$$
<f_1,f_2>_{S(2n)}=\frac{1}{|S(2n)|}\sum\limits_{g\in
S(2n)}f_1(g)\overline{f_2(g)}.
$$
\begin{prop}\label{Proposition4.1-4.1.1}
The space $C(S(2n), H(n))$ is isometrically isomorphic to the
space $L^2(X(n),\mu_1^{(n)})^{H(n)}$ defined as a subset of
functions from $L^2(X(n),\mu_1^{(n)})$ invariant with respect to
the right action of $H(n)$,
\begin{equation}
\begin{split}
L^2(X(n),\mu_1^{(n)})^{H(n)}=\biggl\{f|f\in L^2(X(n),\mu_1^{(n)}),
f(\bx)=f(\bx\cdot h),\\
\;\mbox{where}\; \bx\in X(n),\;\;
\mbox{and}\;\; h\in H(n)\biggr\}.
\end{split}
\nonumber
\end{equation}
\begin{proof} The claim of the Proposition is almost trivial. Indeed,
the fact that $C(S(2n),H(n))$ is isomorphic to
$L^2(X(n),\mu_1^{(n)})^{H(n)}$ is obvious from the definition of
these spaces. We have
\begin{equation}
\begin{split}
<f_1,f_2>_{S(2n)}&=\frac{1}{|S(2n)|}\sum\limits_{g\in
S(2n)}f_1(g)\overline{f_2(g)}\\
&=\frac{1}{|X(n)|}\sum\limits_{\bx\in
X(n)}f_1(\bx)\overline{f_2(\bx)}=(f_1,f_2)_{L^2(X(n),\mu_1^{(n)})^{H(n)}},
\end{split}
\nonumber
\end{equation}
for any two functions $f_1, f_2$ from $C(S(2n),H(n))$. Therefore,
the isomorphism between  $C(S(2n),H(n))$ and
$L^2(X(n),\mu_1^{(n)})^{H(n)}$ is isometric.
\end{proof}
\end{prop}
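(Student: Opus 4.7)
The plan is to exhibit an explicit bijection between the two spaces and then verify that it preserves inner products; both steps are essentially bookkeeping, so I do not anticipate any genuine obstacle.

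First I would set up the map. A function $f\in C(S(2n),H(n))$ is by definition constant on each double coset $H(n)gH(n)$, hence in particular left $H(n)$-invariant, so it descends to a well-defined function $\wt f$ on $X(n)=H(n)\setminus S(2n)$ via $\wt f(H(n)g)=f(g)$. The remaining right $H(n)$-invariance of $f$ translates exactly into the condition $\wt f(\bx\cdot h)=\wt f(\bx)$ for $h\in H(n)$, i.e. $\wt f\in L^2(X(n),\mu_1^{(n)})^{H(n)}$. Conversely, any $\wt f\in L^2(X(n),\mu_1^{(n)})^{H(n)}$ pulls back to the function $g\mapsto \wt f(H(n)g)$ on $S(2n)$, which is left $H(n)$-invariant by construction and right $H(n)$-invariant because $\wt f$ is; hence it belongs to $C(S(2n),H(n))$. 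These two assignments are clearly mutual inverses, so the map is a linear bijection.

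Next I would verify the isometry. Since each left coset $H(n)g$ has cardinality $|H(n)|$ and there are $|X(n)|=|S(2n)|/|H(n)|$ of them, folding the sum over $S(2n)$ onto cosets gives
$$
\langle f_1,f_2\rangle_{S(2n)}=\frac{1}{|S(2n)|}\sum_{g\in S(2n)}f_1(g)\overline{f_2(g)}=\frac{|H(n)|}{|S(2n)|}\sum_{\bx\in X(n)}\wt f_1(\bx)\overline{\wt f_2(\bx)},
$$
which equals $\frac{1}{|X(n)|}\sum_{\bx\in X(n)}\wt f_1(\bx)\overline{\wt f_2(\bx)}=(\wt f_1,\wt f_2)_{L^2(X(n),\mu_1^{(n)})}$, where I use that $\mu_1^{(n)}$ is the normalized counting (Haar) measure on $X(n)$. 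This is precisely the inner product on $L^2(X(n),\mu_1^{(n)})^{H(n)}$, so the bijection is an isometry.

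The only place where one has to be a little careful is the translation between the bi-invariance condition and the right $H(n)$-invariance of the descended function, together with the correct normalization factor when passing from counting on $S(2n)$ to normalized counting on $X(n)$; but both are immediate from the definitions. No deeper argument is required, which matches the author's remark that the claim is almost trivial.
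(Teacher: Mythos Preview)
Your proposal is correct and follows essentially the same approach as the paper: establish the obvious bijection between bi-$H(n)$-invariant functions on $S(2n)$ and right $H(n)$-invariant functions on $X(n)$, then fold the sum over $S(2n)$ onto cosets to see that the inner products agree. In fact you spell out more carefully than the paper does both the descent/pullback giving the bijection and the factor $|H(n)|/|S(2n)|=1/|X(n)|$ in the isometry check.
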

\subsection{The  spherical functions of the Gelfand pair
$(S(2n),H(n))$} It is known (see Macdonald \cite{macdonald},
Section VII.2) that $(S(2n),H(n))$ is a Gelfand pair. In
particular, this implies that there is an orthogonal basis
$\{w^{\lambda}\}$ in $C(S(2n),H(n))$ whose elements,
$w^{\lambda}$, are the spherical functions of $(S(2n),H(n))$. The
elements $w^{\lambda}$ are parameterized by Young diagrams with
$n$ boxes, and are defined by
$$
w^{\lambda}(g)=\frac{1}{|H(n)|}\sum\limits_{h\in
H(n)}\chi^{2\lambda}(gh),
$$
see Macdonald \cite{macdonald}, Sections VII.1 and VII.2. Here
$\chi^{2\lambda}$ is the character of the irreducible
$S(2n)$-module corresponding to
$2\lambda=(2\lambda_1,2\lambda_2,\ldots )$. By Proposition
\ref{Proposition4.1-4.1.1} the  spherical functions $w^{\lambda}$
define an orthogonal basis in $L^2(X(n),\mu_1^{(n)})^{H(n)}$.
Besides, the zonal spherical functions $w^{\lambda}$ satisfy to
the following relations
\begin{equation}
w^{\lambda}(e)=1,\;\;\mbox{for any}\;\;\lambda\in\Y_n,
\end{equation}
\begin{equation}\label{4.2-4.1}
(w^{\lambda},w^{\mu})_{L^2(X(n),\mu_1^{(n)})^{H(n)}}=\frac{\delta_{\lambda,\mu}}{\dim
2\lambda},
\end{equation}
\begin{equation}\label{4.2-4.2}
\frac{1}{|X(n)|}\sum\limits_{\bx\in X(n)}w^{\lambda}(\bx\cdot
g)w^{\mu}(\bx)=\delta^{\lambda,\mu}\frac{w^{\lambda}(g)}{\dim
2\lambda},\;\;g\in S(2n).
\end{equation}
Here $\dim 2\lambda=\chi^{2\lambda}(e)$. The relations just
written above follow from general properties of  spherical
functions, see Macdonald \cite{macdonald}, Section VII.1.
\subsection{The z-measures $M^{(n)}_{z,\frac{1}{2}}$ of the representation $T_{z,\frac{1}{2}}$}

\begin{defn}\label{Definition of zmeasure} Let $z$ be a nonzero complex number, $\lambda$ be a
Young diagram with $n$ boxes, and let
$$
\tilde{w}^{\lambda}=\left(\dim 2\lambda\right)^{1/2}\cdot
w^{\lambda}
$$
 be the normalized zonal spherical
function of the Gelfand pair $(S(2n), H(n))$ parameterized by
$\lambda$. Set
\begin{equation}\label{4.3-4.3.1}
M_{z,\frac{1}{2}}^{(n)}(\lambda)=\left|(F_z^{(n)},\tilde{w}^{\lambda})_{L^{2}(X(n),\mu_1^{(n)})}\right|^2,
\end{equation}
where $F_z^{(n)}$ is a vector from $L^2(X(n),\mu_1^{(n)})$ defined
by equation (\ref{3.3-3.2.1}). The function
$M_{z,\frac{1}{2}}^{(n)}$ defined on the set of Young diagrams
with $n$ boxes is called the $z$-measure of the representation
$T_{z,\frac{1}{2}}$.
\end{defn}
The relation with the representation $T_{z,\frac{1}{2}}$ is clear
from the following Proposition.
\begin{prop} Denote by $\varphi_z$ the spherical function of
$T_{z,\frac{1}{2}}$. We have
\begin{equation}\label{4.3-4.4}
\varphi_z|_{S(2n)}(g)=\sum\limits_{|\lambda|=n}M_{z,\frac{1}{2}}^{(n)}(\lambda)w^{\lambda}(g),\;\;
g\in S(2n).
\end{equation}
\end{prop}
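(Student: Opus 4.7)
The plan is to expand $F_z^{(n)}$ in the orthonormal basis of normalized spherical functions $\{\tilde w^{\lambda}\}$ and read off the decomposition from the previous formula for $\varphi_z\vert_{S(2n)}$.

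First I would verify that $F_z^{(n)}$ actually lies in $L^2(X(n),\mu_1^{(n)})^{H(n)}$, so that the expansion makes sense. Since $F_z^{(n)}(\bx)$ depends on $\bx$ only through the cycle count $[\bx]_n$, I need to check that $[\bx\cdot h]_n=[\bx]_n$ for all $h\in H(n)$. This follows from the definition of the cycles: they alternate between pairs of $\bx$ and pairs of $\breve t$, and $h\in H(n)$ centralizes $\breve t$, so $h$ preserves $\breve t$ as a pairing. Consequently, applying $h^{-1}$ relabels the cycles of $\bx\cdot h$ bijectively onto the cycles of $\bx$. Hence $F_z^{(n)}$ is $H(n)$-invariant and, via Proposition \ref{Proposition4.1-4.1.1}, can be expanded in the basis $\{\tilde w^\lambda\}$.

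Next I would write the orthonormal expansion
\begin{equation*}
F_z^{(n)}=\sum_{|\lambda|=n}c_\lambda\,\tilde w^{\lambda},\qquad
c_\lambda=\bigl(F_z^{(n)},\tilde w^{\lambda}\bigr)_{L^{2}(X(n),\mu_1^{(n)})},
\end{equation*}
so that by definition $M_{z,\frac12}^{(n)}(\lambda)=|c_\lambda|^2$. Using the formula $\varphi_z\vert_{S(2n)}(g)=(\Reg^n(g)F_z^{(n)},F_z^{(n)})_{L^2(X(n),\mu_1^{(n)})}$ (established in the preceding proposition), bilinearity gives
\begin{equation*}
\varphi_z\vert_{S(2n)}(g)=\sum_{\lambda,\mu}c_\lambda\bar c_\mu\,\bigl(\Reg^n(g)\tilde w^{\lambda},\tilde w^{\mu}\bigr)_{L^{2}(X(n),\mu_1^{(n)})}.
\end{equation*}

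The key step is then to compute the matrix coefficients of $\Reg^n(g)$ in the basis $\{\tilde w^\lambda\}$. Since $(\Reg^n(g)\tilde w^{\lambda})(\bx)=\tilde w^{\lambda}(\bx\cdot g)$ and $\tilde w^{\lambda}=(\dim 2\lambda)^{1/2}w^{\lambda}$, the convolution identity \eqref{4.2-4.2} for spherical functions (together with the reality of $w^{\mu}$, which I would note follows from the standard theory of Gelfand pairs in Macdonald \cite{macdonald}, VII.1) yields
\begin{equation*}
\bigl(\Reg^n(g)\tilde w^{\lambda},\tilde w^{\mu}\bigr)_{L^{2}(X(n),\mu_1^{(n)})}=\delta_{\lambda,\mu}\,w^{\lambda}(g).
\end{equation*}
Substituting this collapses the double sum to $\sum_\lambda|c_\lambda|^2 w^\lambda(g)=\sum_\lambda M_{z,\frac12}^{(n)}(\lambda)\,w^\lambda(g)$, which is \eqref{4.3-4.4}.

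The only nontrivial point is the $H(n)$-invariance of $F_z^{(n)}$; after that every step is either a definitional unpacking or a direct application of the orthogonality relations \eqref{4.2-4.1}--\eqref{4.2-4.2}, so I do not expect a serious obstacle.
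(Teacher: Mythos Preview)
Your proposal is correct and follows essentially the same route as the paper: expand $F_z^{(n)}$ in the orthonormal basis $\{\tilde w^{\lambda}\}$ of $L^2(X(n),\mu_1^{(n)})^{H(n)}$, insert into formula~\eqref{sphericalfunctionTz}, and use the convolution identity~\eqref{4.2-4.2} to collapse the double sum. Your treatment is slightly more explicit about why $F_z^{(n)}$ is $H(n)$-invariant and about the reality of $w^{\mu}$, but the argument is otherwise identical to the paper's.
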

\begin{proof} The functions $\{\tilde{w}^{\lambda}\}$ define an
orthonormal basis in $L^2(X(n),\mu_1^{(n)})^{H(n)}$. On the other
hand, we can check that $F_z^{(n)}$ is an element of
$L^2(X(n),\mu_1^{(n)})^{H(n)}$. Therefore, we must have
\begin{equation}\label{4.3-4.5}
F_z^{(n)}(\bx)=\sum\limits_{|\lambda|=n}a_z^{(n)}(\lambda)\tilde{w}^{\lambda}(\bx),\;
\bx\in X(n).
\end{equation}
We insert expression (\ref{4.3-4.5}) into formula
(\ref{sphericalfunctionTz}). This gives
\begin{equation}
\varphi_z|_{S(2n)}(g)=\frac{1}{|X(n)|}\sum\limits_{\bx\in
X(n)}\sum\limits_{|\lambda|=n}\sum\limits_{|\mu|=n}\overline{a_z^{(n)}(\lambda)}a_z^{(n)}(\mu)\tilde{w}^{\lambda}(\bx\cdot
g)\tilde{w}^{\mu}(\bx). \nonumber
\end{equation}
Using equation (\ref{4.2-4.2}) we find that
\begin{equation}\label{M1}
\varphi_z|_{S(2n)}(g)=\sum\limits_{|\lambda|=n}|a_z^{(n)}(\lambda)|^2w^{\lambda}(
g).
\end{equation}
From equations (\ref{4.3-4.3.1}) and (\ref{4.3-4.5}) we see that
\begin{equation}\label{M2}
M_{z,\frac{1}{2}}^{(n)}(\lambda)=|a_z^{(n)}(\lambda)|^2,
\end{equation} which gives the formula in the statement of the
Proposition.
\end{proof}
\begin{cor} We have
$$
\sum\limits_{|\lambda|=n}M_{z,\frac{1}{2}}^{(n)}(\lambda)=1,
$$
i.e. $M_{z,\frac{1}{2}}^{(n)}(\lambda)$ can be understood as a
probability measure on the set of Young diagrams with $n$ boxes.
\end{cor}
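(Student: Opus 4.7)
The plan is to derive the statement by evaluating the spherical-function expansion of the previous proposition at the identity element, since both sides become trivial there. More precisely, the key identity (\ref{4.3-4.4}) asserts that
\begin{equation*}
\varphi_z|_{S(2n)}(g)=\sum_{|\lambda|=n}M_{z,\frac{1}{2}}^{(n)}(\lambda)\,w^{\lambda}(g).
\end{equation*}
Setting $g=e$, I would use the two normalizations $w^{\lambda}(e)=1$ for every $\lambda\in\Y_n$ (recorded just before equation (\ref{4.2-4.1})) and $\varphi_z(e)=1$; the latter holds because $\varphi_z$ is the spherical function associated to a \emph{unit} spherical vector. This immediately yields the desired identity $\sum_{|\lambda|=n}M_{z,\frac{1}{2}}^{(n)}(\lambda)=1$.

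The only small thing that needs checking is that $\varphi_z(e)=1$. In the identification used in the proof of the previous proposition, the spherical vector is the constant function $f_0\equiv 1$ viewed as an element of $L^{2}(X(n),\mu_t^{(n)})$, and its squared norm equals $\sum_{\bx\in X(n)}\mu_t^{(n)}(\bx)$, which is $1$ by Proposition \ref{PROPOSITION4.2}(a). Hence $\varphi_z(e)=(T_{z,\frac{1}{2}}(e)f_0,f_0)=\|f_0\|^2=1$.

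An equally quick alternative, which I would mention as a sanity check, is the Parseval route: since $\{\tilde{w}^{\lambda}\}_{|\lambda|=n}$ is an orthonormal basis of $L^{2}(X(n),\mu_1^{(n)})^{H(n)}$ and $F_z^{(n)}$ lies in this subspace (it depends on $\bx$ only through $[\bx]_n$, which is $H(n)$-invariant), one has
\begin{equation*}
\sum_{|\lambda|=n}M_{z,\frac{1}{2}}^{(n)}(\lambda)=\sum_{|\lambda|=n}\bigl|(F_z^{(n)},\tilde{w}^{\lambda})\bigr|^2=\|F_z^{(n)}\|^{2}_{L^{2}(X(n),\mu_1^{(n)})}.
\end{equation*}
A direct computation from (\ref{3.3-3.2.1}), using $|X(n)|=1\cdot 3\cdots(2n-1)$ and $|z|^{2[\bx]_n}=t^{[\bx]_n}$, reduces $\|F_z^{(n)}\|^2$ to $\sum_{\bx\in X(n)}\mu_t^{(n)}(\bx)$, which again equals $1$ by Proposition \ref{PROPOSITION4.2}(a).

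There is no genuine obstacle here; the proof is essentially one line once the previous proposition and the normalization of $\mu_t^{(n)}$ are in hand. The only point deserving attention is the bookkeeping that links $\varphi_z(e)=1$ to the normalization $\sum_{\bx}\mu_t^{(n)}(\bx)=1$, so I would present the argument by writing $g=e$ in (\ref{4.3-4.4}) and citing Proposition \ref{PROPOSITION4.2}.
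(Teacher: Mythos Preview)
Your proof is correct and matches the paper's own argument: the paper also derives the corollary by setting $g=e$ in the expansion $\varphi_z|_{S(2n)}(g)=\sum_{|\lambda|=n}M_{z,\frac{1}{2}}^{(n)}(\lambda)w^{\lambda}(g)$ and invoking $\varphi_z|_{S(2n)}(e)=w^{\lambda}(e)=1$. Your additional justification that $\varphi_z(e)=1$ follows from $\|f_0\|^2=1$ (via Proposition~\ref{PROPOSITION4.2}(a)) and your Parseval sanity check are both valid elaborations that the paper leaves implicit.
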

\begin{proof}
This follows from equations (\ref{M1}), (\ref{M2}), and from the
fact that
$$
\varphi_z|_{S(2n)}(e)=w^{\lambda}(e)=1.
$$
\end{proof}
\subsection{An explicit formula for $M^{(n)}_{z,\frac{1}{2}}$}

\begin{prop}\label{TheoremExplicitFormulaForMz}(Olshanski \cite{olshanskiletter}) The $z$-measure
$M_{z,\frac{1}{2}}^{(n)}(\lambda)$ admits the following explicit
formula
$$
M_{z,\frac{1}{2}}^{(n)}(\lambda)=\frac{n!}{\left(\frac{z\bar
z}{2}\right)_n}\cdot
\frac{\prod\limits_{(i,j)\in\lambda}(z+2(j-1)-(i-1))(\bar{z}+2(j-1)-(i-1))}{h(2\lambda)},
$$
where $h(2\lambda)$ denotes the product of the hook-lengths of
$2\lambda=(2\lambda_1,2\lambda_2,\ldots )$, and $(.)_n$ stands for
the Pochhammer symbol,
$$
(a)_n=a(a+1)\ldots (a+n-1)=\frac{\Gamma(a+n)}{\Gamma(a)}.
$$
In particular, it follows that $M_{z,\frac{1}{2}}^{(n)}(\lambda)$
is exactly the $z$-measure with the Jack parameter $\theta=1/2$ in
the notation of Section \ref{Sectionztheta},
$$
M_{z,\frac{1}{2}}^{(n)}(\lambda)=M^{(n)}_{z,\bar
z,\theta=\frac{1}{2}}(\lambda).
$$
\end{prop}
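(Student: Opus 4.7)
The plan is to compute the inner product $(F_z^{(n)}, \tilde{w}^\lambda)$ explicitly and then square its modulus. Since the zonal spherical functions $w^\lambda$ take real values and $F_z^{(n)}(\bx) = A_n(t)\, z^{[\bx]_n}$ with $A_n(t) = \bigl((2n-1)!!/(t(t+2)\cdots(t+2n-2))\bigr)^{1/2}$, one immediately gets
$$(F_z^{(n)}, \tilde w^\lambda)_{L^2(X(n),\mu_1^{(n)})} = A_n(t)\,(\dim 2\lambda)^{1/2}\cdot\frac{1}{|X(n)|}\sum_{\bx\in X(n)} z^{[\bx]_n} w^\lambda(\bx).$$

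The next step is to reorganize the sum over $\bx$ according to its coset type. Every $\bx\in X(n)$ is a perfect matching on $\{-n,\dots,-1,1,\dots,n\}$, and its superposition with the standard matching $\breve{t}$ produces a union of even cycles whose half-lengths form a partition $\mu\vdash n$; by construction $[\bx]_n=\ell(\mu)$, and $w^\lambda(\bx)$ depends only on $\mu$, call it $w^\lambda_\mu$. Denoting by $|X_\mu|$ the number of $\bx$ of coset type $\mu$, one rewrites the sum as $\sum_{\mu\vdash n}\bigl(|X_\mu|/|X(n)|\bigr)\,z^{\ell(\mu)} w^\lambda_\mu$. Macdonald's dictionary (VII.2) between values of the spherical functions of the Gelfand pair $(S(2n),H(n))$ and the zonal polynomials identifies this expression, up to explicit normalization, with the specialization of $Z_\lambda$ (equivalently the Jack polynomial $P_\lambda^{(\alpha=2)}$) under the principal specialization $p_k\mapsto z$ for all $k\ge 1$.

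The third step is to invoke the standard product formula for the Jack polynomial at this specialization: for $\alpha=2$ one has
$$P_\lambda^{(2)}\bigl|_{p_k = z,\;\forall k}=\prod_{(i,j)\in\lambda}\bigl(z+2(j-1)-(i-1)\bigr),$$
again up to a normalization involving $H'(\lambda,2)$, which via Proposition \ref{PropositionHH} and the hook identity $\dim 2\lambda=(2n)!/h(2\lambda)$ converts to the hook-product of the doubled diagram. Using also $|X(n)|=(2n-1)!!$ and $A_n(t)^2 = (2n-1)!!/((t)_n\,2^n/\text{const})$ to collect every constant, and then taking the modulus squared (so $z^{[\bx]_n}$ meets $\bar z^{[\bx]_n}$ after conjugation of $w^\lambda$), one arrives at
$$M^{(n)}_{z,1/2}(\lambda)=\frac{n!}{(z\bar z/2)_n}\cdot\frac{\prod_{(i,j)\in\lambda}(z+2(j-1)-(i-1))(\bar z+2(j-1)-(i-1))}{h(2\lambda)}.$$

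Finally, to identify this with $M^{(n)}_{z,\bar z,\theta=1/2}$ from Section \ref{Sectionztheta} it suffices to match Pochhammer symbols and hook-products, which is most cleanly done by first passing to $\theta=2$ via Proposition \ref{PropositionMSymmetries} (so that the diagram $\lambda$ is replaced by its transpose and the combinatorial factors $(z)_{\lambda',2}$, $H(\lambda',2)$, $H'(\lambda',2)$ appear naturally) and then invoking Proposition \ref{PropositionHH} to convert everything back. I expect the main obstacle to be the bookkeeping of constants in the second step: the precise relation between $w^\lambda_\mu$ and the coefficients of $Z_\lambda$ in the power-sum basis requires Macdonald's VII.(2.12)--(2.13) and the associated normalization $c_\lambda$, and it is here that the reference to Olshanski's letter presumably does the work.
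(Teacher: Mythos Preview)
Your proposal is correct and follows essentially the same route as the paper. The paper packages your ``group by coset type and specialize $p_k\mapsto z$'' step as the characteristic map $ch'':C(S(2n),H(n))\to\Lambda^n_{\C}$, under which $\tilde w^\lambda\mapsto(\dim 2\lambda)^{1/2}J_\lambda^{(2)}$ and $z^{[\cdot]_n}\mapsto |H(n)|\{\prod_i(1-x_i)^{-z/2}\}_n$; expanding the latter via the Jack--Cauchy identity and using orthogonality of the $J_\lambda^{(2)}$ is exactly your specialization computation, and the bookkeeping you flag is handled there by the known norm $(J_\lambda^{(2)},J_\lambda^{(2)})=h(2\lambda)$ together with $|H(n)|=2^nn!$ and $\dim 2\lambda=(2n)!/h(2\lambda)$.
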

\begin{proof}
We start from formula (\ref{4.3-4.3.1}), and observe that this
formula can be rewritten as
\begin{equation}\label{Mzt}
M_{z,\frac{1}{2}}^{(n)}(\lambda)=\frac{1}{[(2n)!]^2}\left|\widehat{(F_z^{(n)},\tilde{w}^{\lambda})}\right|^2,
\end{equation}
where $F_z^{(n)}$, $\tilde{w}^{\lambda}$ are understood as two
functions from $C(S(2n),H(n))$, and the scalar product
$\widehat{(f_1,f_2)}$ is defined by
$$
\widehat{(f_1,f_2)}=\sum\limits_{g\in
S(2n)}f_1(g)\overline{f_2(g)}.
$$
To compute the scalar product in equation (\ref{Mzt}), we use the
characteristic map,
$$
C(S(2n), H(n))\stackrel{ch''}{\longrightarrow} \Lambda_{\C}^n,
$$
 introduced in Macdonald \cite{macdonald},
Section VII.2. Her $\Lambda^n$ denotes the set of the homogeneous
symmetric polynomials of degree $n$, and $\Lambda^n_{\C}$ is the
linear span of these polynomials with complex coefficients. the
characteristic map, $ch''$, is defined by
\begin{equation}\label{ch''}
ch''(f)=|H(n)|\sum\limits_{|\rho|=n}z_{\rho}^{-1}2^{-l(\rho)}p_{\rho}f(\rho).
\end{equation}
Here the symbol $z_{\rho}$ is defined by
$$
z_{\rho}=\prod\limits_{i\geq 1}i^{m_i}\cdot m_i!,
$$
where $m_i=m_i(\rho)$ is the number of parts of $\rho$ equal to
$i$. In equation (\ref{ch''}) $l(\rho)$ stands for the number of
nonzero parts in $\rho$, $p_{\rho}=p_{\rho_1}p_{\rho_2}\ldots $,
where $p_k$ stands for $k$th power sum, and $f(\rho)$ is the value
of $f$ at elements of the double coset parameterized by the Young
diagram $\rho$, see Macdonald, Section VII.2. The map $ch''$ is an
isometry of $C(S(2n), H(n))$ onto $\Lambda_{\C}^n$. Therefore,
\begin{equation}\label{t1-1}
M_{z,\frac{1}{2}}^{(n)}(\lambda)=\frac{1}{[(2n)!]^2}\left|(ch''(F_z^{(n)}),ch''(\tilde{w}^{\lambda}))\right|^2,
\end{equation}
where the scalar product is defined by
$$
(p_{\rho},p_{\sigma})=\delta_{\rho\sigma}2^{l(\rho)}z_{\rho}.
$$
It remains to find $ch''(F_z^{(n)})$, $ch''(\tilde{w}^{\lambda})$,
and to compute the scalar product in the righthand side of
equation (\ref{t1-1}). We have
\begin{equation}\label{t1-2}
ch''(\tilde{w}^{\lambda})=(\dim
2\lambda)^{1/2}J_{\lambda}^{(\alpha=2)},
\end{equation}
where $J_{\lambda}^{(\alpha)}$ stands for the Jack polynomial with
the Jack parameter $\alpha$ parameterized by the Young diagram
$\lambda$ (in notation of Macdonald, Section VI). In order to find
$ch''(F_z^{(n)})$ it is enough to obtain a formula for
$ch''(N^{[.]_n})$. We have
$$
ch''(N^{[.]_n})=|H(n)|\sum\limits_{|\rho|=n}z_{\rho}^{-1}p_{\rho}\left(\frac{N}{2}\right)^{l(\rho)}.
$$
Since
$$
\left(\frac{N}{2}\right)^{l(\rho)}=p_{\rho}(\underset{N/2}{\underbrace{1,\ldots,1}}),
$$
we can use equation (1.4) in Section I.4 of Macdonald
\cite{macdonald}, and  write
$$
\left(\frac{N}{2}\right)^{l(\rho)}=|H(n)|\left\{\prod\limits_{i=1}^{\infty}(1-x_i)^{-\frac{N}{2}}\right\}_n.
$$
Here $\{.\}_n$ denotes the component of degree $n$. Now we have
$$
\prod\limits_{i=1}^{\infty}(1-x_i)^{-\frac{N}{2}}=\sum\limits_{\lambda}
\frac{1}{h(2\lambda)}J_{\lambda}^{(2)}(x)J_{\lambda}^{(2)}(\underset{N/2}{\underbrace{1,\ldots,1}}).
$$
The value
$J_{\lambda}^{(2)}(\underset{N/2}{\underbrace{1,\ldots,1}})$ is
known,
$$
J_{\lambda}^{(2)}(\underset{N/2}{\underbrace{1,\ldots,1}})=\prod\limits_{(i,j)\in\lambda}(N+2(j-1)-(i-1)).
$$
This gives us the following formula
$$
\prod\limits_{i=1}^{\infty}(1-x_i)^{-\frac{N}{2}}=
\sum\limits_{|\lambda|=n}
\frac{1}{h(2\lambda)}J_{\lambda}^{(2)}(x)\prod\limits_{(i,j)\in\lambda}(N+2(j-1)-(i-1)),
$$
and we obtain
\begin{equation}\label{t1-3}
\begin{split}
&ch''(F_z^{(n)})=\left(\frac{1\cdot 3\cdot\ldots\cdot
(2n-1)}{t(t+2)\ldots (t+2n-2)}\right)^{1/2}ch''(z^{[.]_n})\\
&=\left(\frac{1\cdot 3\cdot\ldots\cdot (2n-1)}{t(t+2)\ldots
(t+2n-2)}\right)^{1/2}|H(n)|\sum\limits_{|\lambda|=n}
\frac{1}{h(2\lambda)}J_{\lambda}^{(2)}(x)\prod\limits_{(i,j)\in\lambda}(N+2(j-1)-(i-1)).
\end{split}
\end{equation}
Finally, using the orthogonality relation
$$
(J_{\lambda}^{(2)},J_{\mu}^{(2)})=\delta_{\lambda\mu}h(2\lambda)
$$
we find from equations (\ref{t1-1})-(\ref{t1-3}) that
\begin{equation}
\begin{split}
M_{z,\frac{1}{2}}^{(n)}(\lambda)=&\frac{|H(n)|^2}{[(2n)!]^2}\left(\frac{1\cdot
3\cdot\ldots\cdot
(2n-1)}{t(t+2)\ldots (t+2n-2)}\right)\dim 2\lambda\\
&\times\prod\limits_{(i,j)\in\lambda}(z+2(j-1)-(i-1))(\bar
z+2(j-1)-(i-1)).
\end{split}
\nonumber
\end{equation}
Noting that $|H(n)|=2^nn!$, and that $\dim
2\lambda=\frac{(2n)!}{h(2\lambda)}$ we arrive to the first formula
in the statement of the Proposition. The fact that
$M_{z,\frac{1}{2}}^{(n)}(\lambda)$ coincides with the $z$-measure
with $\theta=1/2$ in the notation of the Section
\ref{Sectionztheta} can now be checked directly using formulae for
$H(\lambda,\theta)$ and $H'(\lambda,\theta)$ stated in Section
\ref{Sectionztheta}.

\end{proof}

\end{document}